\documentclass[12pt, a4paper]{amsart}

\usepackage{amsmath, amsthm, amssymb}
\usepackage{graphicx}
\usepackage{enumerate}
\usepackage[top=2.5cm, bottom=2.5cm, left=2.5cm, right=2.5cm]{geometry}
\usepackage{hyperref}

\newtheorem{definition}{Definition}
\newtheorem{lemma}{\bf Lemma}

\newtheorem{theorem}{\bf Theorem}
\newtheorem{corollary}{\bf Corollary}

\theoremstyle{definition}
\newtheorem{remark}{Remark}{\rm}
\newtheorem{example}[subsection]{Example}{\rm}

\usepackage{algorithm}
\usepackage{algorithmic}
\overfullrule 5pt
\newcommand{\algorithmicparameters}{\textbf{Parameters:}}
\newcommand{\PARAMETERS}{\item[\algorithmicparameters]}
\newcommand{\point}{\!\!{\bf  .}}

\usepackage{color}

\usepackage[T1]{fontenc}
\makeatletter
	% disables author uppercase

	\renewcommand{\@secnumfont}{\bfseries}
    \def\section{\@startsection{section}{1}%
    \z@{.7\linespacing\@plus\linespacing}{.5\linespacing}%
    {\normalfont\bfseries\scshape \centering}}
\makeatother

\usepackage{mathrsfs}

%%%%%%%%%%%%%%%%%%%%%%%%%%%%%%%%%%%%%%%%%%%%%%%%%%%%%%%%%%%%

\begin{document}

\title{Nonconvex bundle method with application to a delamination problem}
\author[M.N. Dao, J. Gwinner, D. Noll, and N. Ovcharova]{M.N. Dao$^\dag$, J. Gwinner$^\ast$, D. Noll$^\dag$, and N. Ovcharova$^\ast$}
\thanks{$^\dag$ Institut de Math\'ematiques, Universit\'e de Toulouse, France}
\thanks{$^\ast$ Institute of Mathematics, Department of Aerospace Engineering, Universit\"{a}t der Bundeswehr M\"{u}nchen, Germany}
%

% Use the package "url.sty" to avoid
% problems with special characters
% used in your e-mail or web address
%
\maketitle
%\end{proof}

\begin{abstract}
Delamination is a typical failure mode of composite materials caused by weak bonding. 
It arises when a crack initiates and propagates under a destructive loading. % stress and strain forces. 
Given the physical law 
%$\partial j$  
characterizing the properties
of the interlayer adhesive between the bonded bodies, %agent, 
we consider the problem of computing the propagation of the
crack  front and the stress field along the contact boundary. This leads to a
%non-smooth
hemivariational inequality, which after discretization by finite elements
we solve by a 
nonconvex bundle method, where 
upper-$C^1$ criteria have to be minimized.  As this is in contrast with other classes
of mechanical problems with non-monotone friction
laws and in other applied fields, where  criteria are typically lower-$C^1$,
we propose a bundle method suited for both types of nonsmoothness.
We prove its global convergence  in the sense of subsequences and test it on a typical
%material
delamination problem of material sciences.

\vspace{.4cm}\noindent
{\bf Key words.}
Composite material $\cdot$
delamination $\cdot$ crack front propagation $\cdot$  hemivariational inequality
$\cdot$ Clarke directional derivative $\cdot$
nonconvex bundle method $\cdot$ lower- and upper-$C^1$ function $\cdot$ convergence.
\end{abstract}

\section{Introduction}
We develop a bundle technique to solve nonconvex
variational problems arising  in contact  mechanics and in other applied fields.  
We are specifically  interested in the
delamination of %adhesive
composite structures with an adhesive bonding under destructive loading, a failure mode
which is studied in the material sciences.  
When the properties of the  interlayer adhesive between the bonded bodies %agent
are given in the form of a physical law relating the normal component of the stress vector  
to the relative displacement between the upper and lower boundaries at the crack tip,
the challenge is to compute the  displacement and stress fields in order to assess the reactive destructive forces
along the contact boundary, as the latter are difficult to measure in situ. 
This leads to minimization  of an energy functional,
where a  specific form of nonsmoothness arises in the
boundary integral at the contact boundary.
After discretization via piecewise linear finite elements using the trapezoidal quadrature rule, this
leads to  a finite-dimensional  
%, which can be cast as a 
nonsmooth  optimization problem of the form
\begin{eqnarray}
\label{program}
\begin{array}{ll}
\mbox{minimize} & f(x) \\
\mbox{subject to}& Ax \leq b
\end{array}
\end{eqnarray}
where $f$ is locally Lipschitz and neither smooth nor convex. Depending
on the nature of the frictional forces, 
the criterion $f$ may be upper-$C^1$ or lower-$C^1$,  see e.g.  Figure \ref{dellaw}.
As these two classes of nonsmooth functions behave substantially
differently when minimized, we are  forced to
expand on existing bundle strategies
and develop an
algorithm  general enough to encompass  both
types of nonsmoothness. We prove its convergence to a critical point in the sense of subsequences,
and show that it %works satisfactory
provides satisfactory numerical results in a simulation of the double cantilever beam test
\cite{gudladt},
one of the most popular  destructive tests to qualify structural adhesive joints.
%in a  typical delimitation study.

The difficulty in nonconvex bundling is to provide a suitable
cutting plane oracle which replaces the no longer available 
convex tangent plane. 
One of the oldest oracles,
discussed already in Mifflin \cite{miflin}, and used in the bundle codes of Lemar\'echal
and Sagastiz\'abal \cite{le-saga1,le-saga2}, or the BT-codes of Zowe \cite{zowe,schramm},
uses the method of {\em downshifted tangents}.  While these authors use linesearch 
with Armijo and Wolfe type conditions, which allows only weak convergence certificates
in the sense that {\em some} accumulation point of the sequence of serious iterates is critical,
we favor proximity control in tandem with a suitable backtracking strategy.
This leads to stronger convergence certificates, where {\em every} accumulation point of the sequence of serious iterates
is critical. For instance,
in \cite{noll,pjo,gabarrou} a strong certificate for 
downshifted
tangents with proximity control  was proved within the class of lower-$C^1$ functions, 
but its validity for
upper-$C^1$ criteria remained open. 
An oracle for upper-$C^1$ functions with
a rigorous convergence theory can be based on
the {\em model approach} of \cite{noll,pjo,flows}, but
the latter is not compatible  with the downshift oracle.

To have two strings to one bow is 
unsatisfactory,  as one could hardly expect practitioners to select their strategy
according to such a  distinction, which might not be
easy to make in practice.  In this work we will 
resolve this impasse and present a
cutting plane oracle based on downshifted tangents, which leads to a 
bundle method with strong convergence certificate 
for both types of nonsmoothness. In its principal components
our method agrees with
existing strategies for downshifted tangents,
like \cite{le-saga1,zowe,Makela,Miettinen}, and could therefore be considered as  a justification
of this technique for a wide class of applications. Differences with existing methods 
occur in the management of the proximity control
parameter, which in our approach has to respect certain rules to assure
convergence to a critical point,  without impeding good practical
performance.

The structure of the paper is as follows. 
Section \ref{C1} gives some preparatory information on 
lower- and upper-$C^1$ functions.
Section  \ref{algorithm} presents the algorithm
and comments on its ingredients. Theoretical tools
needed to prove convergence are presented
and employed in sections \ref{modelconcept} and \ref{oracle}.  Section \ref{main}
gives the main convergence result, while section \ref{practical}
discusses practical aspects of the algorithm.
In section \ref{delamination},   we discuss the
delamination problem, which we solve numerically using our bundle algorithm.

Numerical results for contact problem with adhesion based on the bundle-Newton method of  L. Luk\v{s}an and J. Vl\v{c}ek \cite{Luksan} 
can be found e.g. in the book of Haslinger et al. \cite{Haslinger},  in  \cite{Makela, Miettinen}, 
and in the more recent  \cite{ Czepiel,LeoSt}. 
Mathematical analysis and numerical results for quasistatic delamination problems can be found in 
\cite{Kocvara,Roubicek}.

\section{Lower- and upper-$C^1$ functions}
\label{C1} 
Following Spingarn \cite{spingarn}, a locally Lipschitz function
$f:\mathbb  R^n \to \mathbb R$ is  lower-$C^1$ at $x_0$,
if there exists a compact Hausdorff space $K$, a neighborhood $U$
of $x_0$, and 
a mapping $F:U \times K \to \mathbb R$
such that both $F$ and $D_xF$ are jointly continuous and
\begin{eqnarray}
\label{lower}
f(x) = \max\{F(x,y): y\in K\}
\end{eqnarray}
is satisfied
for $x\in U$. The function $f$ is upper-$C^1$ at $x_0$ if $-f$ is
lower-$C^1$ at $x_0$.  

In a minimization problem (\ref{program}), we expect 
lower- and upper-$C^1$ functions to behave completely  differently.
Minimizing a lower-$C^1$ function ought to lead to  real difficulties, as on
descending we  move {\em into} the zone of nonsmoothness, which for lower-$C^1$ goes downward. 
In contrast,  
upper-$C^1$ functions are generally expected to be well-behaved, as intuitively
on descending we 
move {\em away} from the nonsmoothness, which here goes upward. The present application
shows that this
argument is too simplistic. 
Minimization of upper-$C^1$
functions  leads to real difficulties, which we explain subsequently.
In
delamination for composite materials
we encounter objective functions
of the form
\begin{eqnarray}
\label{structure}
f(x) = f_s(x) + \int_0^1 \min_{i\in I} f_i(x,t)\, dt, 
\end{eqnarray}
where $f_s$ gathers the smooth part, while the integral
term, due to the minimum, is responsible for the nonsmoothness. 

\textcolor{black}{
\begin{lemma}
Suppose $f_s$ is of class $C^1$ and
the $f_i$ are jointly of class $C^1$.
Then the function {\rm (\ref{structure})} is upper-$C^1$ and can be represented in the form
\begin{eqnarray}
\label{struct}
f(x) =f_s(x) + \min_{\sigma\in \Sigma} \int_0^1 f_{\sigma(t)}(x,t)\, dt, 
\end{eqnarray}
where $\Sigma$ is the set of all measurable
mappings $\sigma: [0,1] \to I$.
\end{lemma}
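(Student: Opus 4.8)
The plan is to prove the representation (\ref{struct}) first and then read off the upper-$C^1$ property from a relaxed form of it. Throughout I take $I$ to be a finite index set, as the notation $\min_{i\in I}$ indicates, and I fix a compact neighbourhood $U$ of $x_0$ on which the $f_i$ and the partial gradients $\nabla_x f_i$ are bounded and, jointly in $(x,t)\in U\times[0,1]$, uniformly continuous; this is legitimate because the $f_i$ are jointly $C^1$.

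For the identity (\ref{struct}), observe that any measurable $\sigma\colon[0,1]\to I$ satisfies $f_{\sigma(t)}(x,t)\ge\min_{i\in I}f_i(x,t)$ for every $t$, so integrating and adding $f_s(x)$ gives ``$\le$'' in (\ref{struct}). For the reverse inequality I would, for fixed $x$, choose the least-index pointwise minimiser $\sigma_x(t):=\min\{\,i: f_i(x,t)\le f_j(x,t)\ \text{for all }j\,\}$; this $\sigma_x$ is measurable since each $f_i(x,\cdot)$ is continuous, and $f_{\sigma_x(t)}(x,t)\equiv\min_{i}f_i(x,t)$, so the minimum in (\ref{struct}) is attained at $\sigma_x$ and equals $f(x)$.

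To obtain the upper-$C^1$ property I must, by the definition of lower-$C^1$ applied to $-f$, produce a compact Hausdorff space $K$ and a map $F$ with $F$ and $D_xF$ jointly continuous and $-f(x)=\max_{y\in K}F(x,y)$, that is, (\ref{lower}) for $-f$. Since $\Sigma$ carries no convenient compact topology, I would relax it to the set of fuzzy selections
\[
K=\Bigl\{\lambda=(\lambda_i)_{i\in I}\ :\ \lambda_i\in L^\infty([0,1];[0,1]),\ \textstyle\sum_{i\in I}\lambda_i=1\ \text{a.e.}\Bigr\}\ \subset\ \prod_{i\in I}\bigl(L^1([0,1])\bigr)^{*},
\]
equipped with the product weak-$*$ topology. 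By Banach--Alaoglu and Tychonoff the ambient product of unit balls is compact, and $K$ is weak-$*$ closed in it because the constraints $0\le\lambda_i\le1$ and $\sum_i\lambda_i=1$ pass to weak-$*$ limits, so $K$ is compact Hausdorff. I then put $F(x,\lambda)=-f_s(x)-\int_0^1\sum_{i\in I}\lambda_i(t)f_i(x,t)\,dt$.

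Three points remain. (i) Differentiating under the integral sign, legitimate since $\nabla_x f_i$ is bounded on $U\times[0,1]$, gives $D_xF(x,\lambda)=-\nabla f_s(x)-\int_0^1\sum_i\lambda_i(t)\nabla_x f_i(x,t)\,dt$, so $F(\cdot,\lambda)$ is $C^1$. (ii) For joint continuity of $F$ and $D_xF$ I would take $x_n\to x$ and $\lambda^n\to\lambda$ weak-$*$ and split $\int_0^1\lambda_i^n(t)f_i(x_n,t)\,dt=\int_0^1\lambda_i^n(t)\bigl(f_i(x_n,t)-f_i(x,t)\bigr)\,dt+\int_0^1\lambda_i^n(t)f_i(x,t)\,dt$; the first term tends to $0$ because $|\lambda_i^n|\le1$ and $f_i(x_n,\cdot)\to f_i(x,\cdot)$ uniformly on $[0,1]$, and the second tends to $\int_0^1\lambda_i(t)f_i(x,t)\,dt$ since $f_i(x,\cdot)\in L^1$ and that is exactly weak-$*$ convergence tested against a fixed $L^1$ function; the identical argument with $\nabla_x f_i$ in place of $f_i$ settles $D_xF$. (iii) For each $t$ the affine map $\lambda\mapsto\sum_i\lambda_i f_i(x,t)$ on the simplex is minimised at a vertex with value $\min_i f_i(x,t)$, hence $\sum_i\lambda_i(t)f_i(x,t)\ge\min_i f_i(x,t)$ pointwise, with equality for $\lambda$ concentrated on $\sigma_x(t)$; integrating gives $\max_{\lambda\in K}F(x,\lambda)=-f(x)$. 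Thus $-f$ has the form (\ref{lower}), i.e.\ $f$ is upper-$C^1$, and since the maximum is attained at the pure selection $\sigma_x$ we recover (\ref{struct}) once more. I expect step (ii), the joint continuity in the weak-$*$ topology, together with the (standard but indispensable) weak-$*$ compactness of $K$, to be the main obstacle; the measurable-selection step is routine for finite $I$.
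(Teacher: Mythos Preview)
Your proof is correct, and your approach to the upper-$C^1$ part is genuinely different from the paper's. Your argument for the representation (\ref{struct}) is essentially the same as the paper's (a measurable selection of the pointwise minimising index), so there is nothing to add there.

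For the upper-$C^1$ property, the paper does \emph{not} build an explicit lower-$C^1$ representation of $-f$. Instead it invokes the characterisation of lower-$C^1$ functions via strict submonotonicity of the Clarke subdifferential (Spingarn) and its equivalence with approximate convexity (Daniilidis--Georgiev): since each $\varphi(\cdot,t)=\min_{i\in I}f_i(\cdot,t)$ is upper-$C^1$ uniformly in $t$, it is approximately concave uniformly in $t$, and integrating in $t$ preserves approximate concavity, so $\int_0^1\varphi(\cdot,t)\,dt$ is approximately concave, hence upper-$C^1$.

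Your route is to relax $\Sigma$ to the convex set $K$ of simplex-valued $L^\infty$ functions with the weak-$*$ topology---essentially Young measures with values in the finite simplex---and then verify Spingarn's definition directly for $-f$ via the affine functional $F(x,\lambda)$. This is a clean and self-contained argument: Banach--Alaoglu gives compactness, separability of $L^1[0,1]$ even makes $K$ metrizable, and your splitting argument for joint continuity of $F$ and $D_xF$ is the standard one. The paper's proof is shorter if one is willing to quote the Spingarn and Daniilidis--Georgiev equivalences and to accept that approximate concavity is stable under integration uniformly in the parameter; your proof avoids those citations and makes the compact parameter space and the $C^1$ dependence explicit, at the cost of a bit more functional analysis. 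Both are valid; yours has the additional virtue of exhibiting concretely the semi-infinite minimum structure that the paper alludes to after the lemma.
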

}

\begin{proof}
Let us first prove (\ref{struct}).
For $\sigma\in \Sigma$ and fixed $x\in \mathbb R^n$ the function
$t\mapsto f_{\sigma(t)}(x,t)$ is measurable, and since
$\min_{i\in I} f_i(x,t) \leq f_{\sigma(t)}(x,t)\leq \max_{i\in I} f_i(x,t)$, it is also
integrable. Hence
$F(x,\sigma)=f_s(x)+\int_0^1 f_{\sigma(t)}(x,t)\, dt$ is well-defined, and clearly
$F(x,\sigma)\geq f(x)$, so we have $\inf_{\sigma\in \Sigma} F(x,\sigma) \geq f(x)$. 

To prove the reverse estimate,
fix $x\in \mathbb R^n$ and consider the closed-valued multifunction $\Phi:[0,1] \to 2^I$ defined by 
$\Phi(t) = \{i\in I: f_i(x,t) = \min_{i'\in I}f_{i'}(x,t)\}$. Since the $f_i(x,\cdot)$ are measurable and $I$
is finite,  $\Phi$
is a measurable multifunction. Choose a measurable selection $\sigma$, that is, $\sigma\in \Sigma$
satisfying $\sigma(t) \in \Phi(t)$ for every $t\in [0,1]$. Then clearly $F(x,\sigma)=f(x)$. 
This proves (\ref{struct}).

Let us now show that $f$ is upper-$C^1$.
We consider 
$
\varphi(x,t)=\min_{i\in I} f_i(x,t). %\quad \mbox{and} \quad f_n(x,t)=\int_0^1 \varphi(x,t). 
$
In view of \cite{spingarn} $\varphi(\cdot,t)$  is upper-$C^1$ and its Clarke subdifferential $\partial \varphi(\cdot,t)$
is strictly supermonotone uniformly over $t\in [0,1]$. 
By Theorem 2 in \cite{georgiev},
$\varphi(\cdot,t)$ is approximately concave uniformly over $t\in [0,1]$. Integration with respect to 
$t\in [0,1]$ then yields an approximately concave function with respect to $x$, 
which by the equivalences in \cite{georgiev} and \cite{spingarn} is 
upper-$C^1$. 
\end{proof}

Note that the minimum (\ref{struct})
is  semi-infinite  even though $I$ is finite. Minimization of (\ref{structure})  cannot
be converted into a NLP, as would be possible in the min-max case. 
The representation (\ref{struct}) highlights the difficulty in minimizing (\ref{structure}).
Minimizing a minimum has a 
disjunctive character, 
and due to the large size of $\Sigma$ this could lead to
a combinatorial situation with intrinsic difficulty.

\section{The model concept}
\label{modelconcept}
The model of a nonsmooth function was introduced
in \cite{pjo} and is a key element in understanding the bundle concept.

\begin{definition}
[Compare {\rm \cite{pjo}}] 
A function $\phi:\mathbb R^n\times \mathbb R^n \to \mathbb R$
is called a model of the locally Lipschitz function $f:\mathbb R^n\to \mathbb R$
on the set $\Omega \subset \mathbb R^n$ if the following axioms are
satisfied:
\begin{enumerate}
\item[$(M_1)$] For every $x\in \Omega$ the function
$\phi(\cdot,x):\mathbb R^n\to \mathbb R$ is convex,
$\phi(x,x) = f(x)$ and $\partial_1\phi(x,x) \subset \partial f(x)$.
\item[$(M_2)$] For every $x\in \Omega$ and every $\epsilon >0$ there
exists $\delta > 0$ such that $f(y) \leq \phi(y,x) + \epsilon \|y-x\|$
for every $y\in B(x,\delta)$.
\item[$(M_3)$] The function $\phi$ is jointly upper semicontinuous, i.e.,
$(y_j,x_j)\to (y,x)$ on $ \mathbb R^n\times \Omega$ implies $\displaystyle\limsup_{j\to\infty} \phi(y_j,x_j)\leq \phi(y,x)$.
\hfill $\square$
\end{enumerate}
\end{definition}
We recall that every locally Lipschitz function $f$ has the so-called {\em standard model}
\[
\phi^\sharp(y,x) = f(x) + f^0(x,y-x),
\]
where $f^0(x,d)$ is the Clarke directional derivative
of $f$ at $x$ in direction $d$. The same function $f$ may in general have several models $\phi$, and
following \cite{noll,flows}, the standard
$\phi^\sharp$ is the smallest one. Every
model $\phi$ gives rise to a bundle strategy. The question is then whether this bundle strategy
is successful. This depends on 
the following property of $\phi$.

\begin{definition}
A model  $\phi$  of $f$ on $\Omega$
is said to be strict at $x_0\in \Omega$ if axiom $(M_2)$ is replaced
by the stronger
\begin{enumerate}
\item[$(\widehat{M}_2)$] For every $\epsilon > 0$ there exists $\delta > 0$
such that $f(y) \leq \phi(y,x) + \epsilon \|y-x\|$ for all $x,y\in B(x_0,\delta)$.
\end{enumerate}
We say that $\phi$ is a strict model on $\Omega$, if it is strict at every
$x_0\in \Omega$. \hfill $\square$
\end{definition}

\begin{remark}
We may write axiom $(M_2)$ in the form
$f(y) \leq \phi(y,x_0) + {\rm o}(\|y-x_0\|)$ for $y\to x_0$, and
$(\widehat{M}_2)$ as $f(y) \leq \phi(y,x) + {\rm o}(\|y-x\|)$ 
for $x,y\to x_0$. Except for the fact that these concepts are
one-sided, this  is precisely  the difference between differentiability
and strict differentiability. Hence the nomenclature.
\end{remark}

\begin{lemma}
[Compare {\rm \cite{noll,flows}}] 
Suppose $f$ is  upper-$C^1$. Then its standard
model $\phi^\sharp$ is strict, and hence every model $\phi$ of $f$ is strict.
\hfill $\square$
\end{lemma}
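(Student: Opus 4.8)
The plan is to prove the two assertions in turn: first that the standard model $\phi^\sharp$ of an upper-$C^1$ function is strict, and then that the "smallest" property of $\phi^\sharp$ forces strictness onto every other model. The second implication is the easy one, so I would dispose of it first. Given any model $\phi$ of $f$ on $\Omega$, axiom $(M_1)$ gives $\phi(x,x)=f(x)=\phi^\sharp(x,x)$ and $\partial_1\phi(x,x)\subset\partial f(x)$; combined with the convexity of $\phi(\cdot,x)$ this yields $\phi(y,x)\ge \phi(x,x)+\langle g,y-x\rangle$ for any $g\in\partial_1\phi(x,x)\subset\partial f(x)$, and since $f^0(x,y-x)=\max_{g\in\partial f(x)}\langle g,y-x\rangle$ one gets $\phi(y,x)\le$ — wait, the inequality goes the wrong way, so instead I would invoke the cited fact from \cite{noll,flows} that $\phi^\sharp\le\phi$ pointwise. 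Then $f(y)\le\phi^\sharp(y,x)+\epsilon\|y-x\|\le\phi(y,x)+\epsilon\|y-x\|$, so strictness of $\phi^\sharp$ at $x_0$ immediately transfers to $\phi$ at $x_0$.

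The heart of the matter is therefore to show $(\widehat M_2)$ for $\phi^\sharp(y,x)=f(x)+f^0(x,y-x)$, i.e.\ that for every $\epsilon>0$ there is $\delta>0$ with
\begin{equation*}
f(y)\le f(x)+f^0(x,y-x)+\epsilon\|y-x\|\qquad\text{for all }x,y\in B(x_0,\delta).
\end{equation*}
The natural route is to use the upper-$C^1$ structure. Since $f$ is upper-$C^1$ at $x_0$, $-f$ is lower-$C^1$ at $x_0$, so by Spingarn's characterization (and its refinements in \cite{georgiev}) $-f$ is approximately convex near $x_0$: for every $\epsilon>0$ there is $\delta>0$ such that $(-f)(z)\ge (-f)(x)+\langle h,z-x\rangle-\epsilon\|z-x\|$ for all $x,z\in B(x_0,\delta)$ and all $h\in\partial(-f)(x)$. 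Equivalently, $f$ satisfies $f(z)\le f(x)+\langle g,z-x\rangle+\epsilon\|z-x\|$ for all $g\in\partial f(x)$ and all $x,z\in B(x_0,\delta)$. Taking the supremum over $g\in\partial f(x)$ turns the right-hand side into $f(x)+f^0(x,z-x)+\epsilon\|z-x\|$, which is exactly $(\widehat M_2)$ with $z=y$. I would present this as the main computation, being careful that the "approximate convexity" constant $\delta$ can be chosen uniformly on a whole ball, which is the content of the uniform statements already used in the proof of the Lemma on the structure $(\ref{structure})$.

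I expect the main obstacle to be bookkeeping rather than conceptual: making sure the one-sided Taylor-type estimate quantifies correctly in both variables $x$ and $y$ simultaneously (that is what distinguishes $(\widehat M_2)$ from $(M_2)$), and making sure the passage from "for all subgradients $g$" to "for the Clarke directional derivative $f^0(x,\cdot)$" is legitimate — this uses $f^0(x,d)=\max\{\langle g,d\rangle:g\in\partial f(x)\}$, valid for locally Lipschitz $f$. A secondary point is to confirm that the cited equivalence between upper-$C^1$, approximate concavity, and supermonotonicity of the subdifferential in \cite{georgiev,spingarn} is indeed being used in the "uniform on a ball" form; since the Lemma above already appealed to precisely these uniform versions, I would simply reference them. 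Once $(\widehat M_2)$ is established for $\phi^\sharp$ the proof is complete, the second assertion following from $\phi^\sharp\le\phi$ as sketched.
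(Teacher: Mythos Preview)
Your proposal is correct. The paper does not give a self-contained proof of this lemma in situ (it is stated with a citation to \cite{noll,flows} and a $\square$), but the key estimate you derive is exactly the one the paper establishes later in part~4) of the proof of Theorem~\ref{theorem1} for the upper-$C^1$ case: there the authors use the approximate-concavity form $f(y)\le f(z)+t^{-1}(f(z+t(y-z))-f(z))+\epsilon(1-t)\|z-y\|$ from \cite{spingarn,georgiev} and let $t\to 0^+$ to obtain $f(y)\le f(z)+f^0(z,y-z)+\epsilon\|y-z\|$, which is precisely $(\widehat M_2)$ for $\phi^\sharp$. You reach the same conclusion via the equivalent subgradient-inequality characterization of approximate concavity; the difference is purely cosmetic. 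Your reduction of the second assertion to the minimality $\phi^\sharp\le\phi$ (cited in the paper from \cite{noll,flows}) is also the intended route.
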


\begin{remark}
For convex $f$
the standard model $\phi^\sharp$ is in general not strict, but $f$ may be used as its own model $\phi(\cdot,x)=f$.  
For nonconvex $f$, a wide range of
applications is covered by composite
functions $f = g \circ F$ with $g$ convex and $F$
differentiable. Here the so-called natural
model $\phi(y,x) = g(F(x) + F'(x)(y-x))$ can be used, because it is strict
as soon as $F$ is class $C^1$. This includes 
lower-$C^2$ functions in the sense of \cite{rock}, lower-$C^{1,\alpha}$
functions in the sense of \cite{malick-dani},
or amenable functions in the sense of \cite{amenable},
which allow representations of the form $f=g\circ F$
with $F$ of class $C^{1,1}$.
\end{remark}

We conclude with the remark that lower-$C^1$
functions also admit strict models, even though in
that case the construction is more delicate. The strict model
in that case cannot be exploited algorithmically,
and for lower-$C^1$ functions we prefer the oracle
concept, which will be discussed in section
\ref{oracle}.

\section{Elements of the algorithm}
\label{algorithm}
In this section we briefly explain the main features of the algorithm.
This concerns building the working model, computing 
the solution of the tangent program, checking acceptance, updating the working model after null steps,
and the management of the proximity control parameter.

\subsection{Working model}
At the current serious iterate  $x$  the  inner loop
of the algorithm at counter $k$ computes an approximation $\phi_k(\cdot,x)$ of
$f$ in a neighborhood of $x$, called a first-order
working model. The working model is a polyhedral
convex function of the form
\begin{eqnarray}
\label{working}
\phi_k(\cdot,x) = \max_{(a,g)\in \mathcal G_k} a + g^\top (\cdot - x),
\end{eqnarray}
where $\mathcal G_k$ is a finite set of affine functions
$y\mapsto a + g^\top(y-x)$ satisfying $a\leq f(x)$,
referred to as {\em planes}. The set $\mathcal G_k$ is  updated
during the inner loop $k$. 
At each step $k$ the following
rules have to be respected when updating
$\mathcal G_{k}$ into $\mathcal G_{k+1}$:
\begin{enumerate}
\item[$(R_1)$] One or several cutting planes at the null step $y^k$, 
generated by an abstract  cutting plane oracle,
are added to $\mathcal G_{k+1}$.
\item[($R_2)$] The so-called aggregate plane $(a^*,g^*)$, which consists of convex
combinations of elements of $\mathcal G_{k}$, is added to $\mathcal G_{k+1}$.
\item[$(R_3)$] Some older  planes in $\mathcal G_{k}$, which become obsolete
through the addition of the aggregate
plane, are discarded and not kept in $\mathcal G_{k+1}$. 
\item[$(R_4)$] Every $\mathcal G_k$ contains at least one so-called exactness plane  $(a_0,g_0)$, where
exactness plane means $a_0=f(x)$, $g_0\in\partial f(x)$. This assures
$\phi_k(x,x)=f(x)$, hence the name.
\item[$(R_5)$] We have to make sure that each working model $\phi_k$
satisfies $\partial_1 \phi_k(x,x) \subset \partial f(x)$. 
\end{enumerate}
Once the first-order working model $\phi_k(\cdot,x)$
has been built, the second-order working model
$\Phi_k(\cdot,x)$ is of the form
\begin{eqnarray}
\label{second}
\Phi_k(\cdot,x)=\phi_k(\cdot,x)
+
\textstyle\frac{1}{2}(\cdot - x)^\top Q(x) (\cdot -x),
\end{eqnarray}
where $Q(x)=Q(x)^\top$ is a possibly indefinite symmetric 
matrix, depending only on the current serious iterate $x$, and fixed
during the inner loop $k$.  The second-order term includes curvature information on $f$,
if available.

\subsection{Tangent program and acceptance test}
Once the second-order working model (\ref{second})
is formed and the proximity control parameter
$\tau_{k-1} \to \tau_k$ is updated, we solve the tangent program
\begin{eqnarray}
\label{tangent}
\begin{array}{ll}
\mbox{minimize} & \Phi_k(y,x) + \frac{\tau_k}{2}\|y-x\|^2 \\
\mbox{subject to} & Ay\leq b
\end{array}
\end{eqnarray}
Here the proximity control parameter $\tau_k$
satisfies $Q + \tau_k I \succ 0$, which assures that
(\ref{tangent}) is strictly convex and has a unique solution,
$y^k$, called the {\em trial step}. 
The trial step
is a candidate to become the new serious iterate $x^+$.
In order to decide whether $y^k$ is acceptable, we compute the test
\begin{eqnarray}
\label{rho}
\rho_k=\frac{f(x)-f(y^k)}{f(x) - \Phi_k(y^k,x)} \stackrel{?}{\geq} \gamma,
\end{eqnarray}
where $0 < \gamma < 1$ is some fixed parameter.
If $\rho_k \geq \gamma$, then $x^+ = y^k$
is accepted and called a {\em serious step}. In this case the inner loop ends successfully. 
On the other hand, if $\rho_k < \gamma$, then $y^k$
is rejected and called a {\em null step}. In this case the inner loop $k$
continues. This means we will update  working model
$\Phi_k(\cdot,x) \to \Phi_{k+1}(\cdot,x)$, adjust the proximity control parameter $\tau_k\to \tau_{k+1}$, 
and solve (\ref{tangent}) again.

Note that
the test (\ref{rho}) corresponds to the usual
Armijo descent condition used in linesearches, or to the standard
acceptance test in trust region methods.

\subsection{Updating the working model via aggregation}
Suppose the trial step $y^k$ fails the acceptance test (\ref{rho})
and is declared a null step. Then the inner loop has to continue,
and we have to improve the working model at the next sweep
in order to perform better. Since the second-order part of the working model
$\frac{1}{2}(\cdot - x)^\top Q(x)(\cdot - x)$
remains invariant, we will update the first-order part only.

Concerning rule $(R_2)$,
by the necessary optimality condition for (\ref{tangent}),
there exists a multiplier $\eta^*$ such that
\[
0\in \partial_1 \Phi_k(y^k,x) + \tau_k(y^k-x) + A^\top \eta^*,
\]
or what is the same,
\[
(Q(x)+\tau_k I)(y^k-x) - A^\top \eta^* \in \partial_1 \phi_k(y^k,x).
\]
Since $\phi_k(\cdot,x)$ is by construction a maximum of 	affine
planes,  we use the standard description of the 
convex subdifferential of a max-function. Writing $\mathcal G_k
=\{(a_0,g_0),\dots,(a_p,g_p)\}$ for  $p ={\rm card}(\mathcal G_k)+1$,
we find non-negative multipliers $\lambda_0,\dots,\lambda_p$
summing up to 1 such that
\[
(Q(x)+\tau_kI)(y^k-x)-A^\top\eta^* = \sum_{i=0}^p \lambda_i g_i,
\]
and in addition, $a_i + g_i^\top (y^k-x) = \phi_k(y^k,x)$ for all
$i\in \{0,\dots,p\}$ with $\lambda_i > 0$. We say that those
planes which are active at $y^k$ are  {\em called by the aggregate plane}.
In the above rule $(R_3)$ we allow those to be removed  from
$\mathcal G_{k}$.
We now define the aggregate plane as:
\[
a^*_k = \sum_{i=0}^p \lambda_i a_i, \quad g^*_k = \sum_{i=0}^p \lambda_i g_i.
\]
Note that by construction the aggregate plane
$m_k^*(\cdot,x)=a^*_k + g^{*\top}_k(\cdot - x)$ at null step $y^k$ satisfies
$m_k^*(y^k,x)=a^*+g^{*\top}(y^k-x)=\phi_k(y^k,x)$. This construction is standard
and follows the original idea in Kiwiel \cite{kiwiel_aggregate}. It assures in particular that
$\Phi_{k+1}(y^k,x) \geq m_k^*(y^k,x) + \frac{1}{2}(y^k-x)^\top Q(x)(y^k-x) = \Phi_k(y^k,x)$.

\subsection{Updating the working model by cutting planes and exactness planes}
The crucial improvement in the first-order working model is in adding a
cutting plane which cuts away the unsuccessful trial step $y^k$ according to rule $(R_1)$. We shall
denote the cutting plane as $m_k(\cdot,x) = a_k + g^\top(\cdot - x)$. The only requirement
for the time being is that $a_k \leq f(x)$, as this assures $\phi_{k+1}(x,x)\leq f(x)$.
Since we also maintain at least one exactness plane of the form
$m_0(\cdot,x)=f(x) + g_0^\top(\cdot - x)$ with $g_0\in \partial f(x)$, we assure
$\phi_{k+1}(x,x)= \Phi_{k+1}(x,x)=f(x)$.  Later we will also have to check the validity of $(R_5)$.

It is possible to integrate
so-called {\em anticipated cutting planes} in the new working model $\mathcal G_{k+1}$. 
Here anticipated designates all planes which are not 
based on the rules exactness, aggregation, cutting planes. Naturally, 
adding such planes can not be allowed in an arbitrary way, because axioms $(R_1) - (R_5)$ have to be respected.

\begin{remark}
It may be beneficial to choose a new exactness plane $m_0(\cdot,x)= f(x) + g^\top (\cdot-x)$
after each null step $y$, namely the one which satisfies $m_0(y,x) = f^0(x,y-x)$.  
If $x$ is a point of differentiability of $f$, then all these exactness planes are identical anyway, so no
extra work occurs. On the other hand, computing $g\in \partial f(x)$
such that $g^\top (y-x) = f^0(x,y-x)$ is usually cheap. Consider for instance 
eigenvalue optimization, where $f(x) = \lambda_1\left( F(x)\right)$, $x\in \mathbb R^n$,
$F:\mathbb R^n \to \mathbb S^m$, and $\lambda_1:\mathbb S^m\to \mathbb R$ is
the maximum eigenvalue function of $\mathbb S^m$. Then
$f^0(x,d) = \lambda_1'(X,D) = \lambda_1(Q^\top DQ)$, where $X=F(x)$,  $D=F'(x)d$, and
where $Q$ is a $t \times m$ matrix whose columns form an orthogonal basis
of the maximum eigenspace of $X$ of dimension $t$ \cite{cullum}. Then $G=QQ^\top \in \partial \lambda_1(X)$ attains
$\lambda_1'(X,D)$,  hence
$g = F'(x)^* QQ^\top$ attains $f'(x,d)$. Since usually $t\ll m$, the computation of $g$ is cheap.
\end{remark}

\subsection{Management of proximity control}
The central novelty of the bundle methods developed in
\cite{noll,pjo,anp} is the discovery that in the absence of convexity the proximity
control parameter $\tau$ has to follow certain basic rules
to assure convergence of the sequence $x^j$
of serious iterates. This is in contrast with convex bundle
methods, where 
$\tau$ could in principle be frozen once and for all.   
%The central rule which we have to respect is that during the inner
%loop $k$, the $\tau$-parameter may only increase
%infinitely often due to strong discrepancy
%between the current working model $\phi_k$, and the best
%possible working model. 
More precisely, suppose $\phi_k(\cdot,x)$
has failed and produced only a null step $y^k$. Having built the new
model $\phi_{k+1}(\cdot,x)$, 
we compute the secondary
test 
\begin{eqnarray}
\label{tilde-rho}
\widetilde{\rho}_k= \frac{f(x)-\Phi_{k+1}(y^k,x)}{f(x)-\Phi_k(y^k,x)}\stackrel{?}{\geq} \widetilde{\gamma},
\end{eqnarray}
where $0 < \gamma < \widetilde{\gamma}<1$ is fixed. Our decision is
\begin{eqnarray}
\label{dec}
\tau_{k+1}=
\left\{
\begin{array}{ll}
2\tau_k &\mbox{if } \widetilde{\rho}_k \geq \widetilde{\gamma} \\
\tau_k & \mbox{if } \widetilde{\rho}_k < \widetilde{\gamma}
\end{array}
\right.
\end{eqnarray}
The rationale of (\ref{tilde-rho})  is to decide whether improving
the model by adding planes will suffice, or 
shorter steps have to be forced by increasing $\tau$. 

The denominator in (\ref{tilde-rho})
gives the model predicted progress $f(x)-\phi_k(y^k,x)=\phi_k(x,x)-\phi_k(y^k,x)>0$
at $y^k$. On the other hand, the numerator $f(x)-\phi_{k+1}(y^k,x)$
gives the progress over $x$ we would achieve at $y^k$, had we already known
the cutting planes drawn at $y^k$. Due to aggregation we know
that $\phi_{k+1}(y^k,x) \geq \phi_k(y^k,x)$, so that
$\widetilde{\rho}_k \leq 1$, but values
$\widetilde{\rho}_k \approx 1$ indicate that little to no
progress is achieved by adding the cutting plane.
In this case we decide that the $\tau$-parameter must be increased to
force smaller steps, because that reinforces the  agreement
between $f$ and $\phi_{k+1}(\cdot,x)$. 

In the test (\ref{dec}) we replace $\widetilde{\rho}_k\approx 1$ by
$\widetilde{\rho}_k\geq \widetilde{\gamma}$ for some fixed $0 < \gamma < \widetilde{\gamma} < 1$. 
If $\widetilde{\rho}_k < \widetilde{\gamma}$, 
then the quotient if far from 1 and we decide that adding planes has still the potential
to improve the situation. In that event we do not increase
$\tau$. 

Let us next consider the management
of $\tau$ in the outer loop. Since $\tau$ can only increase or 
stay fixed in the inner loop, we allow $\tau$ to decrease
between serious steps $x\to x^+$, respectively, $x^j \to x^{j+1}$.
This is achieved by the test
\begin{eqnarray}
\label{Gamma}
{\rho}_{k_j} =\frac{f(x^j) - f(x^{j+1})}{f(x^j) - \Phi_{k_j}(x^{j+1},x^j)}
\stackrel{?}{\geq} \Gamma,
\end{eqnarray}
where $0 < \gamma \leq \Gamma < 1$ is fixed. In other words,
if at acceptance we have not only $\rho_{k_j}\geq \gamma$,
but even $\rho_{k_j} \geq \Gamma$, then we decrease
$\tau$ at the beginning of the next inner loop $j+1$, because
we may trust the model. On the other hand, if
$\gamma \leq \rho_{k_j} < \Gamma$ at acceptance, then
we memorize the last $\tau$-parameter used, that is $\tau_{k_j}$ at the end of the 
$j$th inner loop.

\begin{remark}
We should compare our management of the proximity control
parameter $\tau$ with other strategies in the literature. For instance
M\"akel\"a {\em et al.} \cite{Makela} consider a very
different management of $\tau$, which is motivated by
the convex case. 
\end{remark}

\subsection{Statement of the algorithm}
We are now ready to give our formal statement
of algorithm \ref{algo1}.

%version 1 \begin{algorithm}

\begin{algorithm}
\caption{{\Large $\point$} Proximity control algorithm for (\ref{program}).}\label{algo1}
\begin{algorithmic}[1]
\hrule
\vskip 2pt
\PARAMETERS $0<\gamma < \Gamma < 1$, $\gamma < \widetilde{\gamma}<1$, 
$0< q <  \infty$,
 $q <  T  <  \infty$.

\STATE {\bf Initialize outer loop}. Choose 
initial guess $x^1$ with $Ax^1 \leq b$ and an initial matrix $Q_1=Q_1^\top$
with $-qI\preceq Q_1 \preceq qI$.  
Fix memory control parameter $\tau_1^\sharp$ such that $Q_1+\tau_1^\sharp I\succ 0$.
Put $j=1$.
\STATE {\bf Stopping test}. At outer loop counter $j$, stop if $0\in \partial f(x^j) + A^\top \eta^*$
for some multiplier $\eta^*\geq 0$.
Otherwise goto inner loop.
\STATE {\bf Initialize inner loop}. Put inner loop counter $k=1$ and initialize
$\tau$-parameter using the memory element, i.e., 
$\tau_1 = \tau^\sharp_j$. Choose initial convex working model $\phi_1(\cdot,x^j)$,
possibly recycling some planes from previous sweep $j-1$, 
and let $\Phi_1(\cdot,x^j)=\phi_1(\cdot,x^j)+\frac{1}{2}(\cdot-x^j)^\top Q_j(\cdot-x^j)$. 
\STATE {\bf Trial step generation.} At inner loop counter $k$ solve tangent program
\[
\min_{Ay\leq b} \Phi_k(y,x^j)+\textstyle\frac{\tau_k}{2}\|y-x^j\|^2.
\]
The solution is the new trial step $y^{k}$.
\STATE {\bf Acceptance test.}
Check whether 
\[
\rho_k=\frac{f(x^j)-f(y^{k})}{f(x^j)-\Phi_k(y^{k},x^j)} \ge \gamma.
\]
If this is the case put $x^{j+1}=y^{k}$ (serious step), quit inner loop and goto step 8. 
If
this is not the case (null step) continue inner loop with step 6.
\STATE {\bf Update working model}. 
Build new convex working model $\phi_{k+1}(\cdot,x^j)$ based
on null step $y^{k}$
by adding an exactness plane $m_k^\sharp(\cdot,x^j)$ satisfying $m_k^\sharp(y^k,x^j)=f^0(x^j,y^k-x^j)$,
a downshifted tangent $m_k^\downarrow(\cdot,x^j)$,  and the  aggregate plane $m_k^*(\cdot,x^j)$. 
Apply rule $(R_3)$ to avoid overflow. Build $\Phi_{k+1}(\cdot,x^j)$,
and goto step 7.
\STATE {\bf Update proximity parameter}. Compute
\[
\widetilde{\rho}_k = \frac{f(x^j)-\Phi_{k+1}(y^{k},x^j)}{f(x^j)-\Phi_k(y^{k},x^j)}.
\]
Put 
\[
\tau_{k+1}= \left\{
\begin{array}{ll}
\vspace{.1cm}
\tau_k, &\mbox{if } \widetilde{\rho}_k  < \widetilde{\gamma} \qquad\mbox{(bad)} \\
%&  \hspace{4cm} \\  %\mbox{ (bad)}\\
\vspace{.1cm}
2\tau_k, &\mbox{if } \widetilde{\rho}_k \ge \widetilde{\gamma}\qquad\mbox{(too bad)}\end{array}
\right.
\]%\vspace{.1cm}
Then increase counter $k$ and continue inner loop with step 4.
\STATE {\bf Update $Q_j$ and memory element}. 
Update matrix $Q_j\to Q_{j+1}$,  respecting $Q_{j+1}=Q_{j+1}^\top$ and
$-qI\preceq Q_{j+1}\preceq qI$. Then
store new memory element
$$\tau_{j+1}^\sharp = \left\{
\begin{array}{ll}
\tau_{k}, & \mbox{if } \gamma \leq \rho_k < \Gamma  \qquad\mbox{    (not bad)}\\
&\\
\displaystyle\textstyle\frac{{1}}{2}\tau_{k}, &\mbox{if } \rho_k \ge \Gamma \qquad\qquad \mbox{    (good)}
\end{array}
\right.$$   
 Increase $\tau_{j+1}^\sharp$ if necessary to ensure $Q_{j+1}+\tau_{j+1}^\sharp I\succ 0$. 
 \STATE {\bf Large multiplier safeguard rule.}
 If  $\tau_{j+1}^\sharp > T$ then re-set $\tau_{j+1}^\sharp = T$.
 Increase outer loop counter $j$  by 1 and loop back to step 2.
\end{algorithmic}
\hrule
\end{algorithm}

%version 1 \end{algorithm}

\section{Nonconvex cutting plane oracles}
\label{oracle}
In the convex cutting plane method \cite{rudzsinski,hule:93}
unsuccessful trial steps $y^k$ are cut away by adding
a tangent plane to $f$ at $y^k$ into the model.
Due to convexity, the cutting plane is below
$f$ and can therefore be used to
construct an approximation
(\ref{working}) of $f$. For nonconvex $f$, cutting planes are more
difficult to construct, but several ideas have been
discussed. We mention \cite{saga,miflin}.
In \cite{noll} we have proposed an axiomatic approach,
which has the advantage that it covers the applications
we are aware of, and allows a convenient convergence theory.
Here we  use this axiomatic approach in
the convergence proof.

\begin{definition}
[Compare {\rm \cite{noll}}] Let 
$f$ be locally Lipschitz. A cutting plane oracle 
for $f$ on the set $\Omega$ is an operator $\mathscr O$ which,
with every pair $(x,y)$, $x$ a serious iterate in $\Omega$, $y\in\mathbb R^n$ a null step, 
associates an affine function
$m_{y}(\cdot,x)= a + g^\top (\cdot - x)$, called the cutting plane at null step $y$
for serious iterate $x$, so that the following axioms are satisfied:
\begin{enumerate}
\item[$(O_1)$] For $y=x$ we have $a=f(x)$ and $g\in \partial f(x)$.
\item[$(O_2)$] Let $y_j\to x$. Then
there exist $\epsilon_j \to 0^+$ such that $f(y_j)\leq m_{y_j}(y_j,x)+\epsilon_j\|y_j-x\|$.
\item[$(O_3)$] Let $x_j\to x$ and  $y_j,y_j^+\to y$. Then there exists $z\in \mathbb R^n$  
such that\\
$\displaystyle\limsup_{j\to\infty} m_{y_j^+}(y_j,x_j)\leq m_{z}(y,x)$.
\hfill $\square$
\end{enumerate}
%\hfill $\square$
\end{definition}

As we shall see, these axioms are aligned with
the model axioms $(M_1) - (M_3)$. Not unexpectedly, there is also
a strict version of $(O_2)$.

\begin{definition}
\label{strict-oracle}
A cutting plane oracle $\mathscr O$  for $f$ is called strict
at $x_0$ if the following strict version of $(O_2)$
is satisfied:
\begin{enumerate}
\item[$(\widehat{O}_2)$]
Suppose $y_j,x_j \to x$. Then there exist $\epsilon_j\to 0^+$
such that $f(y_j)\leq m_{y_j}(y_j,x_j)+ \epsilon_j \|y_j-x_j\|$.  \hfill $\square$
\end{enumerate}
%\hfill $\square$
\end{definition}

We now discuss two
versions of the oracle 
which are of special interest for our applications. 

\begin{example}
[Model-based oracle] 
Suppose $\phi$ is a model
of $f$. Then we can generate a cutting plane
for serious iterate $x$ and trial step $y$ by taking
$g\in \partial_1 \phi(y,x)$ and putting
\[
m_{y}(\cdot,x) = \phi(y,x) + g^\top (\cdot - y)
= \phi(y,x) + g^\top (x-y) + g^\top( \cdot - x).
\]
Oracles generated by a model $\phi$ 
in this way will be denoted $\mathscr O_\phi$. Note that $\mathscr O_\phi$
coincides with the standard oracle if $f$ is convex and $\phi(\cdot,x)=f$,
i.e., if the convex $f$ is chosen as its own model. In more general cases, the simple idea of this oracle
is that in the absence of convexity,  where tangents to $f$ at $y$
are not useful, we simply take tangents
of $\phi(\cdot,x)$ at $y$. Note that the model-based oracle $\mathscr O_\phi$
is strict as soon as the model $\phi$ is strict.
\hfill $\square$
\end{example}

\begin{example}
[Standard oracle] A special case of the model-based oracle
is obtained by choosing the standard model $\phi^\sharp$.
Due to its significance for our present work we call this the standard oracle.
The standard cutting plane for serious step
$x$ and null step $y$ is
$m_y^\sharp(\cdot,x) = f(x)+ g^\top (\cdot -x)$, where the Clarke
subgradient $g\in \partial f(x)$ is  one of those that satisfy $g^\top (y-x)=f^0(x,y-x)$.
The standard oracle is strict iff $\phi^\sharp$ is strict. As was 
observed before, this is for instance the case  when $f$ is upper-$C^1$.
Note a specificity of the standard oracle: every standard cutting plane $m_y^\sharp(\cdot,x)$
is also an exactness plane at $x$. 
\hfill $\square$
\end{example}

\begin{example}
[Downshifted tangents] Probably the oldest oracle used
for nonconvex functions are downshifted tangents, which we
define as follows. For serious iterate $x$ and null step $y$
let $t(\cdot) = f(y) + g^\top (\cdot - y)$ be a tangent of $f$ at $y$.
That is, $g\in \partial f(y)$. Then we shift $t(\cdot)$ down
until it becomes useful for the model (\ref{working}). Fixing a parameter
$c > 0$, this is organized as follows: We define the cutting plane as
$m_{y}^\downarrow(\cdot,x) = t(\cdot) - s$, where the downshift $s \geq 0$
satisfies
\[
s = 
[t(x) - f(x) + c\|y-x\|^2]_+ .
\]
In other words,
$m_{y}^\downarrow(\cdot,x) = a + g^\top (\cdot-x)$, where
$a = \min\{ t(x), f(x)-c\|y-x\|^2  \}$. Note that this procedure
aways satisfies axioms $(O_1)$ and $(O_3)$, whereas axioms
$(O_2)$, respectively, $(\widehat{O}_2)$, are satisfied
if $f$ is lower-$C^1$ at $x_0$. In other words, see \cite{noll}, for $f$ lower-$C^1$
this is an oracle, which is automatically strict.
\hfill $\square$
\end{example}

Motivated by the previous examples, we now define
an oracle which works for both lower-$C^1$  and  upper-$C^1$.

\begin{example}
[Modified downshift] \label{modified}
Let $x$ be the current serious iterate, $y$ a null  step
in the inner loop
belonging to $x$. Then we form the downshifted tangent $m_y^\downarrow(\cdot,x):=t(\cdot)-s$,
that is, the cutting plane we would get from the downshift oracle,
and we form the standard oracle plane $m_y^\sharp(\cdot,x)=f(x) + g^\top (\cdot - x)$, where
the Clarke subgradient $g$ satisfies $f^0(x,y-x) = g^\top(y-x)$. Then we define
\[
m_y(\cdot,x) = \left\{
\begin{array}{ll}
m_y^\downarrow(\cdot,x) &\mbox{ if } m_y^\downarrow(y,x) \geq m_y^\sharp(y,x) 
\vspace*{.2cm}\\
m_y^\sharp(\cdot,x) &\mbox{ else}
\end{array}
\right.
\]
In other words, among the two candidate cutting planes $m_y^\downarrow(\cdot,x)$
and $m_y^\sharp(\cdot,x)$, we take the one which has the larger value
at the null step $y$. 

Note that this is the oracle we use in our algorithm.  Theorem \ref{theorem1}
clarifies when this oracle is strict.
\hfill $\square$
\end{example}

Given an operator $\mathscr O$ which
with every pair $(x,y)$ of serious step $x$ and null step $y$ associates a cutting
plane $m_y(\cdot,x)=a+g^\top (\cdot - x)$, we fix a constant $M>0$ and define
what we call   the upper envelope function of the oracle
\[
\phi^\uparrow(\cdot,x) = \sup\{ m_y(\cdot,x):  \|y-x\| \leq M\}.
\]
The crucial property of $\phi^\uparrow$
is the following
\begin{lemma}
Suppose $\mathscr O: (x,y) \mapsto m_y(\cdot,x)$ is a cutting plane oracle
satisfying axioms $(O_1)-(O_3)$. Then $\phi^\uparrow$ is a model of $f$. Moreover,
if the oracle satisfies $(\widehat{O}_2)$, then $\phi^\uparrow$ is strict.
\hfill $\square$
\end{lemma}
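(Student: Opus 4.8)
The plan is to verify the three model axioms $(M_1)$–$(M_3)$ for $\phi^\uparrow(\cdot,x) = \sup\{m_y(\cdot,x) : \|y-x\|\leq M\}$, each time translating the corresponding oracle axiom into the model statement, and then to check that $(\widehat O_2)$ upgrades $(M_2)$ to $(\widehat M_2)$. First I would record the elementary structural facts: for each fixed $x$ the function $\phi^\uparrow(\cdot,x)$ is a supremum of affine functions $m_y(\cdot,x)=a_y+g_y^\top(\cdot-x)$, hence convex; and since $\|y-x\|\leq M$ ranges over a compact set and the oracle output depends continuously on $y$ in the sense encoded by $(O_3)$, the supremum is finite-valued and in fact attained (the set of cutting planes is ``closed'' in the sense made precise by $(O_3)$). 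This gives convexity of $\phi^\uparrow(\cdot,x)$ for free, the first half of $(M_1)$.

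For the remainder of $(M_1)$, I would evaluate at $y=x$: among the planes contributing to the supremum is the one with $y=x$, which by $(O_1)$ has $a=f(x)$ and $g\in\partial f(x)$, so $\phi^\uparrow(x,x)\geq f(x)$. For the reverse inequality one needs that no plane $m_y(\cdot,x)$ with $\|y-x\|\leq M$ exceeds $f(x)$ at the point $x$ itself; this is where a boundedness/consistency argument is needed, and it should follow from $(O_2)$ together with the fact that along any sequence the cutting planes cannot lie strictly above $f$ at the base point — more carefully, one argues by contradiction using $(O_3)$ to pass to a limiting plane and $(O_2)$ (or its strict form) to contradict $f(x)<\phi^\uparrow(x,x)$. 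Once $\phi^\uparrow(x,x)=f(x)$ is established, the inclusion $\partial_1\phi^\uparrow(x,x)\subset\partial f(x)$ follows because every subgradient of a sup of affine functions at a point where the sup is attained is a convex combination (or limit thereof) of the gradients $g_y$ of active planes; active at $x$ forces (via the previous step) that such planes behave like $(O_1)$-planes, whose gradients lie in $\partial f(x)$, and $\partial f(x)$ is convex and closed.

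Axiom $(M_3)$, joint upper semicontinuity, is the place where $(O_3)$ does its real work: given $(y_j,x_j)\to(y,x)$ I would pick, for each $j$, a plane $m_{z_j}(\cdot,x_j)$ with $\|z_j-x_j\|\leq M$ nearly attaining $\phi^\uparrow(y_j,x_j)$; compactness gives a convergent subsequence $z_j\to z$, and $(O_3)$ (applied with $y_j^+=z_j$ and the role of $y_j$ played by $y_j$) yields $\limsup_j m_{z_j}(y_j,x_j)\leq m_z(y,x)\leq\phi^\uparrow(y,x)$, provided $\|z-x\|\leq M$ — which holds by continuity of the norm. Some care is needed because the supremum over the ball at $x_j$ is over a slightly different index set than at $x$; I would handle this by the standard trick of first proving the inequality with $M$ replaced by $M+\eta$ and letting $\eta\to0$, or by noting the $\sup$ is attained so no $\eta$-slack is lost.

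Finally, for $(M_2)$ and its strict version: given $x$ (resp.\ $x_0$) and $\epsilon>0$, for $y$ near $x$ (resp.\ $x,y$ near $x_0$) the plane $m_y(\cdot,x)$ is itself one of the planes in the supremum defining $\phi^\uparrow(y,x)$, so $\phi^\uparrow(y,x)\geq m_y(y,x)\geq f(y)-\epsilon_j\|y-x\|$ by $(O_2)$ (resp.\ $m_{y_j}(y_j,x_j)\geq f(y_j)-\epsilon_j\|y_j-x_j\|$ by $(\widehat O_2)$), which is exactly $(M_2)$ (resp.\ $(\widehat M_2)$) after noting $\epsilon_j\to0^+$. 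The main obstacle I anticipate is the bookkeeping around the constraint $\|y-x\|\leq M$ when the base point moves — ensuring that the index ball at a limit point $x$ really does recapture the planes used near perturbed base points $x_j$ — and the attainment claim for the supremum, which underpins both $(M_1)$ at $y=x$ and the clean form of $(M_3)$; both are resolved by the compactness of the ball combined with the limiting behavior guaranteed by $(O_3)$.
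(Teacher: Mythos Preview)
The paper does not actually prove this lemma; after stating it, the text says only ``The proof can be found in \cite{noll}.''  So there is no in-paper argument to compare against, and your proposal has to be judged on its own.  The overall architecture --- translate $(O_i)$ into $(M_i)$ one by one --- is the right one, and your treatment of $(M_2)$ and $(\widehat M_2)$ is clean and correct: the plane $m_y(\cdot,x)$ is one of the competitors in the supremum, so $\phi^\uparrow(y,x)\ge m_y(y,x)\ge f(y)-\epsilon\|y-x\|$ directly from $(O_2)$ or $(\widehat O_2)$.

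There are, however, two genuine gaps.  First, the inequality $\phi^\uparrow(x,x)\le f(x)$ does \emph{not} follow from the contradiction you sketch.  Axiom $(O_2)$ supplies a \emph{lower} bound $m_{y_j}(y_j,x)\ge f(y_j)-\epsilon_j\|y_j-x\|$, never an upper bound on $m_y(x,x)$; and $(O_3)$ with constant sequences merely reproduces the offending plane.  Nothing in $(O_1)$--$(O_3)$ as stated here forbids $m_y(x,x)>f(x)$.  In the concrete oracles of the paper (downshift, standard, modified downshift) the constraint $a\le f(x)$ is built in by construction, and the source \cite{noll} presumably includes it as part of the oracle definition; but you cannot manufacture it from the three axioms listed.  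This gap propagates to your argument for $\partial_1\phi^\uparrow(x,x)\subset\partial f(x)$: a plane being active at $x$ (i.e., $m_y(x,x)=f(x)$) does not, from these axioms alone, force its slope into $\partial f(x)$.

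Second, your verification of $(M_3)$ misapplies $(O_3)$.  As written in this paper, $(O_3)$ requires the two sequences $y_j$ and $y_j^+$ to converge to the \emph{same} limit $y$.  In your argument the near-maximizers $z_j$ play the role of $y_j^+$, but they converge (along a subsequence) to some $z$ with $\|z-x\|\le M$, not to the evaluation limit $y$.  So the hypothesis of $(O_3)$ is not met, and the bound $\limsup_j m_{z_j}(y_j,x_j)\le m_w(y,x)$ is not available.  Either the version of $(O_3)$ in \cite{noll} allows distinct limits for $y_j$ and $y_j^+$ (which would make your argument go through, and would also explain why the conclusion of $(O_3)$ involves an unspecified $z$ rather than $y$), or the proof there proceeds differently; in any case the step as you wrote it does not close with the axiom as stated here.
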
 

The proof can be found in \cite{noll}. We refer to $\phi^\uparrow$ as the upper envelope
model associated with the oracle $\mathscr O$. Since in turn every model $\phi$ gives rise to a model-based oracle,
$\mathscr O_\phi$,
it follows that having a strict oracle and having a strict model are equivalent properties of $f$.
Note, however, that the model $\phi^\uparrow$ is in general not practically useful. It is a theoretical tool in the convergence proof.

\begin{remark}
If we start with a model $\phi$, then build $\mathscr O_\phi$, and go back to
$\phi^\uparrow$,  we get back to $\phi$, at least locally.

On the other hand, going from an oracle $\mathscr O$ to its envelope model $\phi^\uparrow$,
and then back to the model based oracle $\mathscr O_{\phi^\uparrow}$ does {\em not}
necessarily lead back to the oracle $\mathscr O$.
\end{remark}

We are now in the position to check axiom $(R_5)$.

\begin{corollary}
All working models $\phi_k$ constructed in our algorithm
satisfy $\partial_1\phi_k(x,x)\subset \partial f(x)$. \hfill $\square$
\end{corollary}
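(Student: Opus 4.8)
The plan is to verify rule $(R_5)$ by directly inspecting the three mechanisms that put planes into the working model. I would start from the elementary observation that $\phi_k(\cdot,x)$ is a finite maximum of affine functions $y\mapsto a+g^\top(y-x)$ over $(a,g)\in\mathcal{G}_k$, with $a\le f(x)$ for every plane and with $\phi_k(x,x)=f(x)$ (rule $(R_4)$). The standard subdifferential formula for a polyhedral max-function then gives
\[
\partial_1\phi_k(x,x)=\mathrm{conv}\{\,g:(a,g)\in\mathcal{G}_k,\ a=f(x)\,\}.
\]
Since the Clarke subdifferential $\partial f(x)$ is convex, it therefore suffices to establish the invariant $(P)$: every plane $(a,g)\in\mathcal{G}_k$ satisfies $a\le f(x)$, and $a=f(x)$ forces $g\in\partial f(x)$. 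Indeed $(P)$ immediately yields $\partial_1\phi_k(x,x)\subset\mathrm{conv}\,\partial f(x)=\partial f(x)$.

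I would prove $(P)$ by induction on the inner-loop counter $k$, reading the update rules $(R_1)$--$(R_3)$ as a loop invariant. For the base case, the planes of $\mathcal{G}_1$ consist of an exactness plane $m_k^\sharp(\cdot,x)=f(x)+g^\top(\cdot-x)$ with $g\in\partial f(x)$ chosen so that $g^\top(y-x)=f^0(x,y-x)$ --- such $g$ exists because $f^0(x,\cdot)$ is the support function of $\partial f(x)$ --- possibly together with planes recycled from the previous sweep after the downshift that restores $a\le f(x)$; both respect $(P)$. For the inductive step, a newly added cutting plane comes from the oracle of Example~\ref{modified}, hence equals either the standard plane $m_y^\sharp(\cdot,x)$, which has $a=f(x)$, $g\in\partial f(x)$ exactly as above, or the downshifted tangent $m_y^\downarrow(\cdot,x)=a+g^\top(\cdot-x)$ with $a=\min\{t(x),\,f(x)-c\|y-x\|^2\}$; since $c>0$ and the null step satisfies $y\ne x$, we get $a\le f(x)-c\|y-x\|^2<f(x)$, so the condition in $(P)$ is vacuous (and for $y=x$ the plane degenerates to a subgradient plane at $x$, again fine). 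Old planes retained under $(R_3)$ satisfy $(P)$ by the induction hypothesis.

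It remains to treat the aggregate plane of rule $(R_2)$, which is the essential point. By construction $(a_k^*,g_k^*)=\big(\sum_{i=0}^p\lambda_i a_i,\ \sum_{i=0}^p\lambda_i g_i\big)$ with $\lambda_i\ge0$, $\sum_i\lambda_i=1$, and $(a_i,g_i)\in\mathcal{G}_k$, each satisfying $(P)$ by the induction hypothesis. Hence $a_k^*\le f(x)$, and since every term $\lambda_i(f(x)-a_i)$ is non-negative, $a_k^*=f(x)$ can hold only if $a_i=f(x)$ for all $i$ with $\lambda_i>0$; for those indices $g_i\in\partial f(x)$, so $g_k^*$ is a convex combination of points of the convex set $\partial f(x)$, whence $g_k^*\in\partial f(x)$. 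This closes the induction and proves the corollary.

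The only delicate bookkeeping, and where I expect the work to concentrate, is in making the induction genealogy airtight: aggregate planes may themselves be aggregated at later sweeps, and planes may be recycled across serious steps, so one must check that $(P)$ is genuinely preserved by $(R_1)$--$(R_3)$ at every stage and for every serious iterate. Once $(P)$ is installed as the loop invariant, the case distinctions above are routine.
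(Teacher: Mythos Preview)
Your argument is correct and takes a more hands-on route than the paper. The paper places the corollary immediately after the lemma asserting that the upper envelope $\phi^\uparrow(\cdot,x)=\sup\{m_y(\cdot,x):\|y-x\|\le M\}$ is a model of $f$, and derives $(R_5)$ from that: every plane in $\mathcal G_k$ lies below $\phi^\uparrow(\cdot,x)$ (cutting planes by construction, exactness planes because they are the oracle planes at $y=x$ via $(O_1)$, aggregates because $\phi^\uparrow$ is convex), hence $\phi_k(\cdot,x)\le\phi^\uparrow(\cdot,x)$ with equality at $x$, and then axiom $(M_1)$ for $\phi^\uparrow$ yields $\partial_1\phi_k(x,x)\subset\partial_1\phi^\uparrow(x,x)\subset\partial f(x)$. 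You instead install the pointwise property $(P)$ as a loop invariant and verify it plane by plane, never invoking $\phi^\uparrow$. Your route is elementary and self-contained; it also handles recycled planes cleanly, since those are not genuine oracle planes and so are not automatically dominated by $\phi^\uparrow$---one must in any case observe, as you do, that the recycling downshift forces $a'\le f(x^{j+1})-c\|x^{j+1}-x^j\|^2<f(x^{j+1})$, making the implication in $(P)$ vacuous. The paper's route is more conceptual: once the oracle axioms and the envelope lemma are in place, $(R_5)$ drops out structurally.
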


%%%%%%%%% MECHANIC %%%%%%%%%%%%%

\section{Main convergence result}
\label{main}
In this section we state and prove the main result
of this work and give several
consequences.

\begin{theorem}
\label{theorem1}
Let $f$ be locally Lipschitz and suppose for every $x\in \mathbb R^n$, $f$ is either lower-$C^1$ or  upper-$C^1$ at $x$.
Let $x^1$ be  such that $Ax^1 \leq b$ and  $\{x\in \mathbb R^n: f(x)\leq f(x^1), Ax\leq b\}$
is bounded. 
Then every accumulation point $x^*$ of the sequence $x^j$ of serious iterates
generated by algorithm {\rm \ref{algo1}}
is a KKT-point of {\rm (\ref{program})}. 
\end{theorem}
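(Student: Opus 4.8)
The plan is to follow the two-tier structure of bundle convergence proofs: first show that each inner loop terminates, i.e.\ either produces a serious step after finitely many null steps, or else (if it loops forever) forces the current serious iterate to be a KKT-point; then show that the outer sequence of serious iterates has all its accumulation points KKT. I would begin by collecting the structural facts already assembled in the excerpt. By Example~\ref{modified} and Theorem~\ref{theorem1}'s hypothesis, at any serious iterate $x$ the function $f$ is either lower-$C^1$ or upper-$C^1$ there; in the first case the downshift branch of the oracle is active and gives a strict oracle by the downshifted-tangents example, and in the second case the standard branch is strict because $\phi^\sharp$ is strict for upper-$C^1$ functions. So in either case the modified downshift oracle is strict at $x$, hence by the Lemma on $\phi^\uparrow$ there is a strict model $\phi^\uparrow(\cdot,x)$ dominating all the cutting planes generated in the inner loop within radius $M$. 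Rules $(R_1)$–$(R_5)$ together with the Corollary guarantee $\phi_k(x,x)=f(x)$, $\partial_1\phi_k(x,x)\subset\partial f(x)$, and that each $\phi_k$ is majorized by $\phi^\uparrow(\cdot,x)$ in a fixed neighborhood.

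Next I would treat the inner loop at a fixed non-KKT serious iterate $x=x^j$. Suppose it never produces a serious step, so $\rho_k<\gamma$ for all $k$. The $\tau_k$ are nondecreasing; either $\tau_k\to\infty$ or $\tau_k$ is eventually constant. If $\tau_k\to\infty$, the trial steps $y^k\to x$, and since $y^k\ne x$ (else $x$ would be KKT, because the optimality condition for the tangent program at $y^k=x$ reads $0\in\partial_1\phi_k(x,x)+A^\top\eta^*\subset\partial f(x)+A^\top\eta^*$), one derives a contradiction with $(\widehat O_2)$/strictness of the model: the predicted decrease $f(x)-\Phi_k(y^k,x)$ stays comparable to $\frac{\tau_k}{2}\|y^k-x\|^2$ while the strictness of $\phi^\uparrow$ forces $f(y^k)\le\Phi_k(y^k,x)+o(\|y^k-x\|^2)$ after accounting for curvature bounds $-qI\preceq Q_j\preceq qI$, so $\rho_k\to1$, contradicting $\rho_k<\gamma$. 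If instead $\tau_k$ is eventually constant, then by the secondary test $(\ref{tilde-rho})$ we must have $\widetilde\rho_k<\widetilde\gamma$ for all large $k$; here the aggregation construction from Kiwiel (the aggregate plane $m_k^*$ keeps $\phi_{k+1}(y^k,x)\ge\phi_k(y^k,x)$ and the new cutting plane cuts $y^k$ away) forces, via the standard boundedness/compactness argument on the $y^k$ and on the subgradients in $\partial_1\phi_k(y^k,x)$, that $\phi_{k+1}(y^k,x)-\phi_k(y^k,x)\to0$; combined with the cutting-plane exactness $m_k(y^k,x)\ge f(y^k)-\epsilon_k\|y^k-x\|$ and the acceptance failure, this again pins $\widetilde\rho_k\to1\ge\widetilde\gamma$, a contradiction. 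Hence the inner loop terminates with a serious step whenever $x^j$ is not KKT.

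For the outer loop, I would argue by contradiction: let $x^*$ be an accumulation point that is not KKT. The set $\{f\le f(x^1),Ax\le b\}$ is bounded and the serious steps satisfy $f(x^{j+1})\le f(x^j)$, so $f(x^j)$ converges and $f(x^j)-f(x^{j+1})\to0$. The acceptance test gives $f(x^j)-f(x^{j+1})\ge\gamma\big(f(x^j)-\Phi_{k_j}(x^{j+1},x^j)\big)\ge0$, so the predicted decreases tend to $0$. Along a subsequence $x^j\to x^*$, I would show the last $\tau_{k_j}$ used stays bounded: the memory element $\tau_j^\sharp$ is capped at $T$ by the safeguard rule, and it is only doubled finitely often inside a terminating inner loop before a serious step, so boundedness of $\tau_{k_j}$ would follow from a uniform bound on the number of null steps near $x^*$ — this is the delicate point. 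Granting it, $\|x^{j+1}-x^j\|\to0$ along the subsequence (from strict convexity of the tangent program with modulus $Q_j+\tau_{k_j}I\succ0$ bounded below, and vanishing predicted decrease), so $x^{j+1}\to x^*$ too; passing to the limit in the optimality condition $0\in\partial_1\phi_{k_j}(x^{j+1},x^j)+\tau_{k_j}(x^{j+1}-x^j)+A^\top\eta^*_j$, using $\partial_1\phi_{k_j}(x^{j+1},x^j)\subset\partial\phi^\uparrow(x^{j+1},x^j)$, joint upper semicontinuity $(M_3)$/$(O_3)$ of the envelope model, outer semicontinuity of $\partial f$, and boundedness of the multipliers $\eta^*_j$ (from the bounded feasible set and a constraint-qualification for the polyhedron $Ax\le b$, e.g.\ that $A$ has the Abadie/Slater property, or simply compactness of the normal cone's generating directions), yields $0\in\partial f(x^*)+A^\top\eta^*$ with $\eta^*\ge0$ — so $x^*$ is KKT, a contradiction.

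The main obstacle is exactly the boundedness of $\tau_{k_j}$ along a subsequence converging to $x^*$, equivalently a uniform bound on the number of null steps in inner loops indexed near $x^*$: without it the predicted decrease could vanish merely because $\tau_{k_j}\to\infty$ while $\|x^{j+1}-x^j\|$ does not go to $0$, breaking the limiting argument. I expect to handle this by a local argument around $x^*$ — using strictness of the model uniformly on a neighborhood of $x^*$ (the strict axiom $(\widehat M_2)$, valid because $f$ is lower- or upper-$C^1$ there and the oracle is correspondingly strict), together with the curvature bound $\|Q_j\|\le q$ — to show that once $x^j$ is close enough to $x^*$ and $\tau$ exceeds some threshold depending only on the local data, the very next trial step is accepted; this caps the doublings of $\tau$ inside each such inner loop and, with the safeguard $\tau^\sharp_{j}\le T$, caps $\tau_{k_j}$ uniformly. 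This is the same mechanism by which the inner loop was shown to terminate, reused in a uniform-over-a-neighborhood form, and it is where the careful management rules $(\ref{tilde-rho})$–$(\ref{dec})$ and $(\ref{Gamma})$ for the proximity parameter earn their keep.
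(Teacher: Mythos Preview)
Your proposal takes a genuinely different route from the paper. The paper's proof is almost entirely a reduction: it observes that the conclusion follows immediately from \cite[Theorem~1]{noll} once the modified downshift oracle of Example~\ref{modified} is shown to be a \emph{strict} oracle, and then devotes itself to verifying axioms $(O_1)$, $(\widehat{O}_2)$, $(O_3)$ for that oracle. The substantive work is the verification of $(\widehat{O}_2)$, which splits cleanly: if $f$ is upper-$C^1$ at $x$, one uses approximate concavity (via \cite{spingarn,georgiev}) to show the standard branch $m^\sharp$ already satisfies $f(y_j)\le m_{y_j}^\sharp(y_j,x_j)+\epsilon_j\|y_j-x_j\|$; if $f$ is lower-$C^1$ at $x$, one uses approximate convexity to bound the downshift $s_j\le\epsilon_j\|y_j-x_j\|$, so the downshift branch $m^\downarrow$ does the job. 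Your first paragraph contains this reduction in compressed form, but you then proceed to re-derive the entire bundle convergence theory that \cite{noll} already supplies.

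Your self-contained route is legitimate in outline, but two of your steps are genuinely incomplete as written. In the inner loop with $\tau_k$ eventually constant, you invoke ``cutting-plane exactness $m_k(y^k,x)\ge f(y^k)-\epsilon_k\|y^k-x\|$'' with $\epsilon_k\to 0$; but axioms $(O_2)$ and $(\widehat{O}_2)$ only furnish such $\epsilon_k\to 0$ when $y^k\to x$, which is precisely what fails in the frozen-$\tau$ regime. The correct argument there (as in \cite{noll}) goes through monotonicity of the tangent-program values, compactness of the $y^k$, and axiom $(O_3)$ to pass to limits---not through $(O_2)$. Similarly, your claim $\phi_{k+1}(y^k,x)-\phi_k(y^k,x)\to 0$ and hence $\widetilde{\rho}_k\to 1$ needs the predicted decrease $f(x)-\Phi_k(y^k,x)$ bounded away from zero, which you have not secured. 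For the outer loop you correctly flag boundedness of $\tau_{k_j}$ as the crux and sketch the right mechanism (uniform strictness near $x^*$ caps the number of doublings), but this too is where the proof in \cite{noll} does real work that you are only gesturing at. The paper's approach buys economy and correctness by delegating all of this; yours would buy self-containment, but only after these gaps are closed.
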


\begin{proof}
The result will follow
from \cite[Theorem 1]{noll} as soon as we show
that downshifted tangents as modified in Example \ref{modified} and used in the algorithm
is a strict cutting plane oracle in the sense of definition \ref{strict-oracle}.
The remainder of the proof is to verify this.

1)
Let us denote cutting planes arising from the standard model $\phi^\sharp$
by $m_y^\sharp(\cdot,x)$,  cutting planes obtained by downshift as $m_y^\downarrow(\cdot,x)=t(\cdot)-s$,
and the true cutting plane of the oracle as $m_y(\cdot,x)$. Then as we know
$m_y(\cdot,x)=m_y^\downarrow(\cdot,x)$ if $m_y^\downarrow(y,x) \geq m_y^\sharp(y,x)$,
and otherwise $m_y(\cdot,x)=m_y^\sharp(\cdot,x)$. We have to check $(O_1)$, $(\widehat{O}_2)$, $(O_3)$.

2)
The validity of $(O_1)$ is clear, as both oracles provide Clarke tangent planes to $f$ at $x$ for 
$y=x$. 

3)
Let us now check $(O_3)$. Consider $x_j\to x$, and $y_j,y_j^+ \to y$. Here $y_j^+$ is a null step at serious step $x_j$.
Passing to a subsequence, we may distinguish  case I,
where $m_{y_j^+}(\cdot,x_j) = m_{y_j^+}^\sharp(\cdot,x_j)$ for every $j$,
and case II, where $m_{y_j^+}(\cdot,x_j)=m_{y_j^+}^\downarrow(\cdot,x_j)$ for every $j$.

Consider case I  first. Let $m_{y_j^+}^\sharp(y_j,x_j) = f(x_j) + g_j^\top (y_j-x_j)$, where $g_j\in \partial f(x_j)$
satisfies
$f^0(x_j,y_j^+-x_j)=g_j^\top (y_j^+-x_j)$. 
Passing to yet another subsequence, we may assume $g_j\to g$, and
upper semi-continuity of the Clarke subdifferential gives $g\in \partial f(x)$.
Therefore $m_{y_j^+}(y_j,x_j) = f(x_j) + g_j^\top (y_j-x_j)\to
f(x) + g^\top (y-x) \leq m_y^\sharp(y,x)\leq m_y(y,x)$. So here $(O_3)$ is satisfied
with $z=y$.

Newt consider case II. Here we have
$m_{y_j^+}(y_j,x_j) = t_{g_j}(y_j) - s_j$, where $t_{g_j}(\cdot)$ is a tangent to $f$ at $y_j^+$
with subgradient $g_j\in \partial f(y_j^+)$, and $s_j$ is the corresponding downshift
\[
s_j = \left[ t_{g_j}(x_j) -f(x_j) + c\|y_j^+-x_j\|^2   \right]_+.
\] 
Passing to a subsequence, we may assume $g_j\to g$, and by upper semi-continuity of $\partial f$ we have
$g\in \partial f(y)$. Therefore $s_j \to \left[ t_g(x) - f(x) + c\|y-x\|^2 \right]_+=:s$, where uniform
convergence $t_{g_j}(y_j)\to t_g(y)$ occurs due to the boundedness of $\partial f$. But now we see that
$s$ is the downshift for the pair $(x,y)$ when $g\in \partial f(y)$ is used.
Hence $m_{y_j^+}(y_j,x_j) \to m_y^\downarrow(y,x)$, and since
$m_y^\downarrow(y,x)\leq m_y(y,x)$, we are done. So again the $z$ in $(O_3)$ equals $y$ here.

4) Let us finally check axiom $(\widehat{O}_2)$. 
Let $x_j,y_j\to x$ be given.
We first consider the case when $f$ is upper-$C^1$ at $x$.
We have to find $\epsilon_j\to 0^+$
such that $f(y_j) \leq m_{y_j}(y_j,x_j) + \epsilon_j \|y_j-x_j\|$ as $j\to \infty$, and by the definition of the oracle,
it clearly suffices to show $f(y_j) \leq m_{y_j}^\sharp(y_j,x_j) + \epsilon_j\|y_j-x_j\|$.
By Spingarn \cite{spingarn},  or Daniilidis and Georgiev \cite{georgiev},
$-f$, which is lower-$C^1$ at $x$, has the following property: For every $\epsilon > 0$
there exists $\delta >0$ such that for all $0 < t < 1$ and $y,z\in B(x,\delta)$, 
\[
f(y) \leq f(z) + t^{-1} \left( f(z+t(y-z)) -f(z) \right) + \epsilon (1-t) \|z-y\|.
\]
Taking the limit superior $t\to 0^+$ implies
$$
f(y) \leq f(z) + f'(z,y-z) +\epsilon \|y-z\| \leq f(z) + f^0(z,y-z)+\epsilon \|y-z\|.
$$ 
Choosing $z=x_j$, $y=y_j$, $\delta_j = \|y_j-z_j\|\to 0$, we can find
$\epsilon_j \to 0^+$ such that
$f(y_j)\leq f(x_j) + f^0(x_j,y_j-x_j) + \epsilon_j \|y_j-x_j\|$, hence
$f(y_j) \leq m_{y_j}^\sharp(y_j,x_j) + \epsilon_j\|y_j-x_j\|$ by the definition of $m^\sharp_{y_j}(\cdot,x_j)$.
That settles the upper-$C^1$ case.

Now consider the case where $f$ is lower-$C^1$ at $x$. 
We have to find $\epsilon_j\to 0^+$
such that $f(y_j) \leq m_{y_j}(y_j,x_j) + \epsilon_j \|y_j-x_j\|$ as $j\to \infty$, and 
it suffices to show $f(y_j) \leq m_{y_j}^\downarrow(y_j,x_j) + \epsilon_j\|y_j-x_j\|$.
Since $m_{y_j}^\downarrow(y_j,x_j) \geq f(y_j) - s_j$, where 
$s_j$ is the downshift $s_j = \left[ t(x_j) - f(x_j) + c\|y_j-x_j\|^2 \right]_+$, and
$t(\cdot)=f(y_j) + g_j^\top (\cdot-y_j)$ for some $g_j\in \partial f(y_j)$, 
it suffices to exhibit $\epsilon_j\to 0^+$ such that
$f(y_j) \leq f(y_j)-s_j + \epsilon_j \|y_j-x_j\|$, or what is the same,
$s_j \leq \epsilon_j \|y_j-x_j\|$. For that it suffices to
arrange $\left[t(x_j) - f(x_j)\right]_+ \leq \epsilon_j \|y_j-x_j\|$, because once this is verified,
we get $s_j \leq \left[ t(x_j)-f(x_j) \right]_+ + c\|y_j-x_j\|^2
\leq (\epsilon_j + c\|y_j-x_j\|) \|y_j-x_j\| =: \widetilde{\epsilon}_j \|y_j-x_j\|$.
Note again that
by \cite{spingarn,georgiev} 
$f$ has the following property at $x$: For every $\epsilon > 0$
there exists $\delta > 0$ such that 
$f(tz+(1-t)y) \leq tf(z) + (1-t)f(y) + \epsilon t(1-t)\|z-y\|$ for all $y,z\in B(x,\delta)$.
Dividing by $t>0$  and passing to the limit $t\to 0^+$ gives
$f^0(y,z-y) \leq f(z) - f(y) + \epsilon \|y-z\|$, using the fact that $f$ is locally Lipschitz. 
But for every $g\in \partial f(y)$, $g^\top (z-y) \leq f^0(y,z-y)$.
Using $\|y_j-x_j\| =: \delta_j \to 0$ and taking $y=y_j$, $z=x_j$, this allows us to find
$\epsilon_j\to 0^+$ such that 
$g_j^\top (x_j-y_j) \leq f(x_j)-f(y_j)+\epsilon_j \|y_j-x_j\|$.
Substituting this above gives
$t(x_j)-f(x_j) = f(y_j)-f(x_j) + g_j^\top (x_j-y_j)\leq \epsilon_j \|y_j-x_j\|$ as desired.
That settles the lower-$C^1$ case.
\end{proof}

\section{Practical aspects of the algorithm}
\label{practical}
In this section we discuss several
technical aspects of the algorithm, which are
important for its performance.

\subsection{Stopping}
The stopping test in step 
2 of the algorithm is stated in this form for the sake of the convergence proof. In practice we delegate  stopping  
to the inner loop using
the following two-stage procedure.

If the inner loop 
at serious iterate $x^j$ finds the new serious step $x^{j+1}$
such that
\[
\frac{\|x^{j+1}-x^j\|}{1+\|x^j\|} < {\rm tol}_1, \quad \frac{|f(x^{j+1})-f(x^j)|}{1+|f(x^j)|} < {\rm tol}_2,
\]
then we decide that $x^{j+1}$ is optimal. In consequence, the $(j+1)$st inner loop will
not be executed. On the other hand, if the inner loop has difficulties  terminating
and  produces five consecutive
null steps $y^k$ where
\[
\frac{\|y^k-x^j\|}{1+\|x^j\|} < {\rm tol}_1, \quad \frac{|f(y^k)-f(x^j)|}{1+|f(x^j)|} < {\rm tol}_2,
\]
or if a maximum number $k_{\rm max}$  of allowed steps in the inner loop is reached, then
we decide that $x^j$ is optimal. In our experiments we use
\textcolor{black}{${\rm tol}_1= 10^{-5}$, ${\rm tol}_2= 10^{-5}$},
and $k_{\rm max}=50$.

%\begin{remark}
%We have observed that for
%good performance of our algorithm, we have to find a range in which
%$\tau$ behaves dynamically, going
%up and down. Namely, we have observed that when $\tau$ is silent from the beginning,
%or increases steadily, the algorithm usually encounters difficulties.
%\end{remark}

\subsection{Recycling of planes}
At the beginning of a new inner loop at serious step $x^{j+1}$,
we do not want to start building the working model $\phi_1(\cdot,x^{j+1})$ from scratch.
It is more efficient to recycle some of the planes $(a,g) \in \mathcal G_{k_j}$
in the latest working model $\phi_{k_j}(\cdot,x^j)$. In the convex cutting plane
method, this is self-understood, as cutting planes are affine minorants of $f$,
and can at leisure stay on in the sets $\mathcal G$ at all times $j,k$. Without convexity,
we need the following recycling procedure:

Given a plane $m(\cdot,x^j) = a + g^\top (\cdot - x^j)$
in the latest set $\mathcal G_{k_j}$, we form the new downshifted plane
\[
m(\cdot,x^{j+1}) = m(\cdot,x^j) - s,
\]
where the downshift is organized as
\[
s = \left[  m(x^{j+1},x^j) - f(x^{j+1}) + c \|x^{j+1}-x^j\|^2\right]_+.
\]
In other words, we treat $m(\cdot,x^j)$ like a tangent to $f$ at null step
$x^j$ with respect to the serious step $x^{j+1}$ in the downshift oracle. We put
\[
m(\cdot,x^{j+1}) = a + g^\top (\cdot - x^j) - s
= a - s + g^\top (x^{j+1}-x^j) + g^\top (\cdot - x^{j+1}), 
\]
and
we accomodate $(a-s+g^\top(x^{j+1}-x^j),g)\in \mathcal G_1$
at the beginning of the $(j+1)$st inner loop.
\textcolor{black}{In the modified version we only keep a plane of this type in $\mathcal G_1$ after comparing it to
the exactness plane $m_0(\cdot,x^{j+1})= f(x^{j+1})+g^\top (\cdot - x^{j+1})$, $g\in \partial f(x^{j+1})$, which satisfies
$g^\top (x^{j}-x^{j+1})=f^0(x^{j+1},x^j-x^{j+1})$. Indeed, when $m(x^j,x^{j+1}) \geq m_0(x^j,x^{j+1})$, then we keep
the downshifted plane, otherwise we add $m_0(\cdot,x^{j+1})$ as additional exactness plane.  }

\section {The delamination benchmark problem} \label{delamination}

The interface behavior of laminated composite materials
is modeled by a non-monotone multi-valued function $\partial j$, 
characteristic of the  interlayer adhesive  placed at the contact boundary $\Gamma_c$.
In more precise terms, $\partial j$ is the physical law which holds
between the normal component $-S_n(s)|\Gamma_c$
of the  stress vector 
and the relative displacement $u_2(s)|\Gamma_c$, or {\em jump},  between the upper and lower boundaries. 
A typical law $\partial j$ for an interlayer adhesive is shown in Figure \ref{dellaw} (left).
In the material sciences, the knowledge of $\partial j$ is crucial
for the understanding of the basic failure modes of
the composite material. 

The adhesive law $\partial j$ is usually determined experimentally using
the double cantilever beam test \cite{gudladt} or other destructive testing methods. The result of a typical experiment
is shown schematically in Figure \ref{contaminated}  from \cite{gudladt}, where three probes with different levels of
contamination have been exposed. 
While the intact material shows stable
propagation of the crack front (dashed curve),
the 10\% contaminated
specimen shows a typical zig-zag profile (bold solid curve), indicating unstable crack front propagation. 
Indeed, when reaching the critical load $P=140$N, the crack starts to propagate. Since by the growth
of the  crack-elongation, 
the compliance of the structure increases, the crack propagation slows down and the crack is "caught", i.e.,  
stops at $u_2=0.25$mm and the load $P$ in the structure drops from $P=140$N to $P=40$N. 
Thereafter, due to the continuously increased load,  the crack starts
again to propagate until reaching another critical load
level at $P=90N$ and $u_2=5$mm. This phenomenon occurs five to six times, as seen in Figure \ref{contaminated}.

\begin{figure}[h!]
\centering
\includegraphics{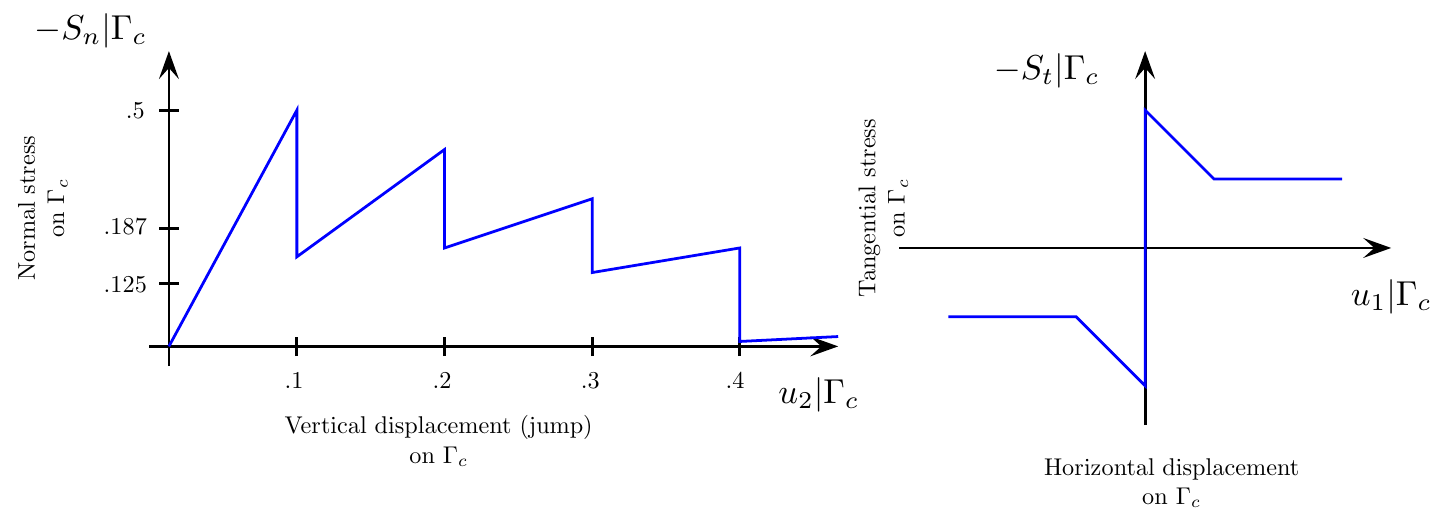}
\caption{Left image shows non-monotone delamination law $\partial j$, leading to an upper-$C^1$ objective. Right image
shows non-monotone friction law, leading to a lower-$C^1$ objective.  \label{dellaw}}
\end{figure}

The 50\% contaminated specimens (dotted curve) shows micro-cracks that appear at a finer level  and are
not visible in the Figure \ref{contaminated}. The lower level of the adhesive energy, which is represented by the area below the load-displacement curve, indicates now that this specimen is of minor resistance. 

Even though
the displacement $u_2$ in Figure \ref{contaminated} can only be measured
at the crack tip, in order to proceed one now {\em stipulates} the law $\partial j$ all along
$s\in \Gamma_c$  by assuming that the normal stresses 
$S_n(s)|\Gamma_c$
follow the  measured behavior
\begin{eqnarray}
\label{law1}
-S_n(s) \in  \partial j(s, u_2(s)), \; s\in \Gamma_c.
\end{eqnarray}
Under this hypothesis one now
solves the variational
inequality for the unknown displacement field ${\bf u}=(u_1,u_2)$, 
and then validates 
%computes the reactive forces $-F_2(s)$, $s\in \Gamma_c$ from ${\bf u}$ using 
(\ref{law1}). 
Note that $S_n(s)|\Gamma_c$ is the {\em truly} relevant information, as it indicates the 
action of the destructive forces along $\Gamma_c$,  explaining
eventual failure of the composite. 
In current practice in the material sciences,
this information cannot be assessed by direct measurement,
and is therefore estimated by heuristic formulae \cite{gudladt}.  Our approach could  be
interpreted as one such estimation technique based on mathematical modeling.

\begin{figure}[h!]
\centering
\includegraphics[width =\textwidth]{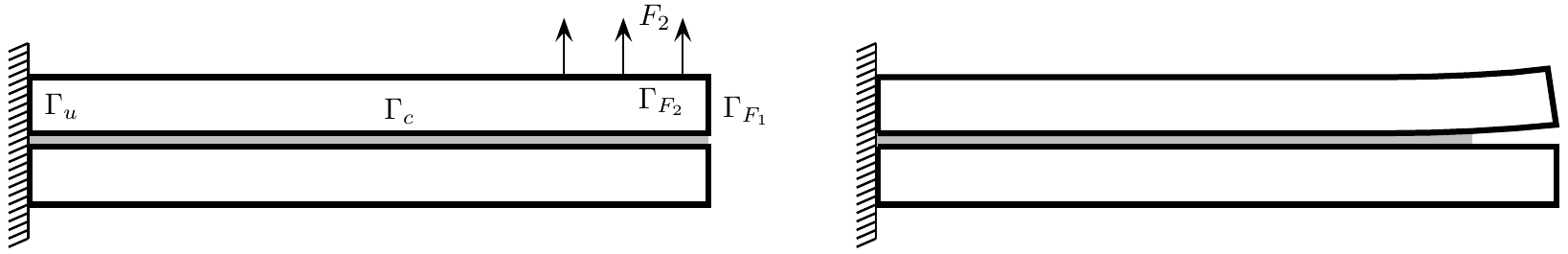}
\caption{Schematic view of cantilever beam testing. Under applied traction force $F_2$  the crack front
propagates to the left. In program (\ref{del2})  traction force $F_2$ and crack front length are given, while the corresponding
displacement $u$ and reactive forces $-S_n|\Gamma_c$ along the contact boundary $\Gamma_c$ have to be computed. \label{model}}
\end{figure}

 \subsection{Delamination study}
Within the framework of plane linear elasticity
we consider  a symmetric %two-dimensional 
laminated structure with 
an interlayer adhesive under loading (see Fig. \ref{model}).
% version 1 \begin{figure}[b]
%\begin{center}%
%\vspace{-0.5cm}
%a\\a
%\includegraphics[trim = 3cm 10cm 0cm 3cm, clip,width=0.8\textwidth]{del_benchmark.pdf}
%a\\a
%\end{center}
%\vspace{-3.5cm}
%\caption{\small A 2D benchmark with force distribution and boundary decomposition} \label{2Dbenchmark1}
%\end{figure}
Because of the 
symmetry of the structure, it suffices to consider only the upper half of the specimen, represented by $\Omega \subset \mathbb R^2$.  
The Lipschitz boundary $\Gamma$ of $\Omega $ consists of four disjoint
parts $\Gamma_u$, $\Gamma_c$, $\Gamma_{F_1}$ and $\Gamma_{F_2}$. 
%: a Dirichlet boundary $\Gamma_u$, a contact boundary $$ and a part GF, where
%given external forces are applied. 
The body is fixed on $\Gamma_u$, i.e.,
$$
u_i=0 \; \mbox{on} \; \Gamma_u, \quad i=1,2.
$$
On $\Gamma_{{F_1}} $ the traction forces $\mathbf F$ are constant and given as
$$
\mathbf{F}=(0,F_2) \quad \mbox{on} \; \Gamma _{F_1}.
$$
The part $\Gamma _{F_2}$ is load-free.
%$$
%\mathbf{F}=(0,0) \quad \mbox{on} \; \Gamma _F^2
%$$
We adopt standard notation from linear elasticity and introduce 
the bilinear form of  linear elasticity 
%elastic operator $A:V\to V^*$ by %be the linear operator of linear elasticity
\begin{equation}\label{elastic}
 a(\mathbf u, \mathbf v) = \int _ \Omega \mathbf 
\varepsilon(\mathbf u) \, : \, \mathbf \sigma (\mathbf v) \, dx,
\end{equation}
where $\mathbf u=(u_1,u_2)$ is the displacement vector, 
$\mathbf \varepsilon ({\bf u}) = \frac{1}{2}(\nabla \mathbf u + (\nabla \mathbf u )^T)$ 
the linearized strain tensor,  and 
$\mathbf \sigma (\mathbf v) = \mathbf C : \mathbf \varepsilon (\mathbf v)$  the stress tensor.
 Here, $\mathbf C$ is the elasticity tensor with symmetric positive $L^{\infty}$
 coefficients. The bilinear form is symmetric and due to the first Korn inequality,
coercive. 
%which are  assumed to be symmetric and 
The linear form $\langle \mathbf{g}, \cdot \rangle$ is defined by
$$
\langle \mathbf{g},\mathbf{v}\rangle =F_2\int _{\Gamma_{F_1}}v_2\, ds.
$$
On the contact boundary $\Gamma_c$ we have the unilateral constraint
$$
u_2 \geq 0 \quad \mbox{a.e. on} \; \Gamma_c
$$
and we apply the non-monotone multi-valued adhesive  law
\begin{eqnarray}
\label{law}
 -S_n(s) \in \partial j(s, u_2(s)) \quad  \mbox{for a.a.} \; s\in \Gamma _c.
\end{eqnarray}
%Note that in (\ref{law}) $S_n$ is the reactive force, that is, the normal component of the stress vector on the boundary $\Gamma_c$, defined by 
Here $S_n=\sigma_{ij} n_j n_i$, where  $\mathbf n=(n_1,n_2)$ is the outward unit normal vector to $\Gamma_c$.

A typical non-monotone law $\partial j(s, \cdot)$ for delamination, 
describing the behavior of the adhesive,  is shown in Fig. \ref{dellaw}. 
This law is derived from a nonconvex and a nonsmooth locally Lipschitz 
super-potential $j$  expressed in terms of a minimum function. 
In particular, $j(s,\cdot)$ is a minimum of four convex quadratic  and one linear function.  %\\ [0.2cm]

We also assume that tangental traction can be neglected on $\Gamma_c$, i.e., $S_t(s)=0$. 
The weak
formulation of the delamination problem is then given by the following hemivariational 
inequality:   Find 
$\mathbf{u}\in K$ such that
\begin{equation} \label{del}
a(\mathbf{u},\mathbf{v}-\mathbf{u}) + \displaystyle \int_{\Gamma_c}j^0(s, u_2(s);v_2(s)-
u_2(s)) \, ds \geq \langle \mathbf{g}, \mathbf{v}-\mathbf {u}
\rangle \quad \forall \, \mathbf{v}\in K,
\end{equation}
where 
$j^0(s, u;d)$ is the  Clarke  directional derivative of 
$j(s,\cdot)$ at ${u}$ in direction $d$, 
%for all $\mathbf{v}\in K$, while
$K$ is the nonempty, closed  convex set of all admissible displacements defined by
$$
K=\{\mathbf{v} \in V \, : \, v_2 \geq 0  \, \, \;  \mbox{on} \, \, \; \Gamma_c \},
$$
contained in the function space
$$
V=\{ \mathbf{v} \in H^1(\Omega; \mathbb R^2)\;: \; \mathbf{v}=0 \;\, \mbox{on} \;\, \Gamma _u \}.
$$
The potential energy  of the  problem is
$$
\Pi (\mathbf v) =\frac{1}{2} a(\mathbf v, \mathbf v) + J(\mathbf v)  - \langle \mathbf{g}, \mathbf{v} \rangle, 
$$
where $J:V\to \mathbb R$ defined by
$$
J(\mathbf v)=\int_{\Gamma_c} j(s, v_2(s)) \, ds 
$$
is the term responsible for the nonsmoothness. 
Using the potential energy, the hemivariational inequality 
  (\ref{del})  can be transformed to  the
following  nonsmooth, nonconvex
constrained optimization problem
of the form (\ref{program})
\begin{eqnarray}
\label{del2}
\begin{array}{ll}
\mbox{minimize} & \Pi(\mathbf u) \\
\mbox{subject to} & {\mathbf u} \in K
\end{array}
\end{eqnarray}
where the objective is upper-$C^1$, because the super-potential $j(s,\cdot)$ 
is a minimum. In particular, we have an objective of the form (\ref{structure}),
where the smooth part $f_s$ comprises $\frac{1}{2}a({\mathbf v},{\mathbf v})
- \langle \mathbf g,\mathbf v\rangle$, while the nonsmooth
part  $J(\mathbf v)=\int_{\Gamma_c} j(s,v_2(s))\,ds$ has the
form  (\ref{structure}) with a finite
index set $I$ once the boundary integral is suitably parametrized.

According to the existence theory in \cite{Naniewicz}, problem (\ref{del2})
has at least one Clarke critical point ${\mathbf u}^*$ satisfying
the necessary optimality condition
$$
0\in \partial \,\Pi ({\bf u}^*) + N_K({\bf u}^*),
$$
where $N_K({\mathbf u})$ is the normal cone to  %he nonempty, closed and convex set 
$K$ at $\mathbf u$,
and vice versa,
by a result in \cite{Miettinen}  every critical point of 
$\Pi$ on $K$ is a solution of (\ref{del}) (see also \cite{Makela}).

\begin{figure}[h!]
\includegraphics{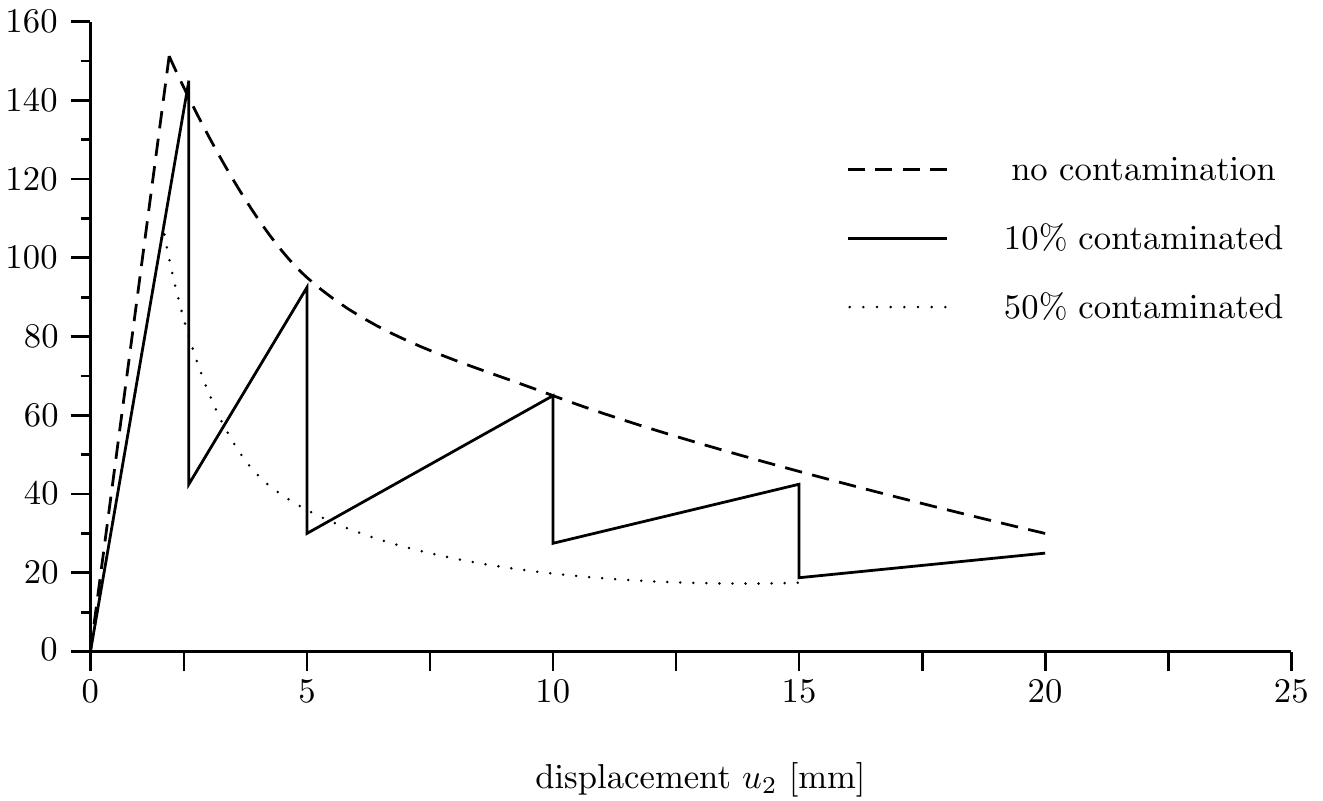}
\caption{Load-displacement curve determined by double cantilever beam test. Dashed curve shows stable behavior for
material without contamination. The 10\% contaminated specimen (bold solid curve) shows unstable crack growth. 
After initial linear growth, when the critical load $P=140$N is reached,
the crack starts to propagate. But then the propagation speed slows down, since by the crack the compliance of the specimen increases, and the crack is "caught" at $u_2=0.25$mm. The load $P$ drops from $P=140$N to $P=40$N. Then, by the constantly applied traction force, there is a linear growth of the load $P$  from $ P=40$N to the critical load $P=90$N, where the crack propagates again and stops at $u_2=5$mm, with the load now reduced to $P=30$N.  
%accelerates and then suddenly stops
%at a critical length $u_2=0.25$mm, where the load drops from $P=140$N to $P=40$N.
%Restarting to increment from $P=40$N, the specimen becomes loaded up again up to a critical force $P=90$N, 
%where the crack accelerates again and stops at $u_2=5$mm, with the load now dropping to $P=30$N. 
%This phenomenon can then be repeated various times.
The 50\% contaminated specimen exhibits  micro-cracks not visible at the chosen scale. %reover, the crack starts to propagate under a smaller load $\rm P$.
\label{contaminated}}

%The 50\% contaminated specimen in contrast shows again  a stable crack propagation (dotted curve), 
%but now weaker loads $P$ are needed to
%move the crack front, indicating an overall loss of adhesive energy.  \label{contaminated}}
\end{figure}

\subsection{Discrete problem}
%Next we briefly describe the discretization of (\ref{del}). 
We consider a regular triangulation $\{\mathcal T_h\}$ of $\Omega$, where
we first divide $\Omega$ into small squares of  size $h$ and then each
square by its diagonal into two triangles. 
%Let $\{\mathcal{T}_h \}$ be a regular triangulation of $\Omega$.  
To approximate 
$V$ and $K$  we use a piecewise linear finite element approximation and set 
$$
V_h=\{v_h \in C(\overline{\Omega};\mathbb R^2)\, : \, {v_h}_{|_{T}} \in ({\bf P}_1)^2,
\, \forall \,T \in \mathcal T_h, \, {v_h}_{|_{\Gamma_u}}=0 \},
$$
%and $K_h$ be a nonempty, closed, convex subset of $V_h$ defined  by 
$$
K_h=\{v_h \in V_h: v_{h2}(s_\nu)\geq 0 \quad \forall \, s_\nu \in \overline{\Gamma}_c\backslash \overline{\Gamma}_u\}.
$$
Similar to low order finite element approximations of nonsmooth convex contact problems \cite{Glowinski1, Gwinner}, 
we use the trapezoidal quadrature rule to approximate the functional $J$ by 
 \begin{eqnarray}
\label{Jh}
J_h(v_h)  =  \frac{1}{2} \displaystyle \sum_{s_\nu\in \overline{\Gamma}_c\setminus\overline{\Gamma}_u}  |s_\nu s_{\nu+1}| \big [ 
j(s_\nu,v_{h2}(s_\nu)) + %\nonumber
%\\[0.1cm]
 %+ \frac{\partial S}{\partial \xi}
 j(s_{\nu+1},v_{h2}(s_{\nu+1}))
\big ] ,  %\label {delappr}
\end{eqnarray}
where we  are summing over the 
nodes  $s_\nu$ on the contact boundary $\overline{\Gamma}_c\backslash  \overline{\Gamma}_u$, with
$s_{\nu+1}$ being the neighbor of node $s_\nu$ on $\Gamma_c$ in the sense of integration.
This can be regrouped as
\[
J_h(v_h)= \sum_{s_\nu\in \overline{\Gamma}_c\setminus\overline{\Gamma}_u} c_\nu
j(s_\nu,v_{h2}(s_\nu)) = \sum_{s_\nu\in \overline{\Gamma}_c\setminus\overline{\Gamma}_u} c_\nu \min_{i\in  I}
j_i(s_\nu,v_{h2}(s_\nu))
\]
with appropriate weights $c_\nu>0$.
Here, $ I$ is the set of zig-zags in the graph of $\partial j$. 

The bundle  algorithm is applied to minimize the discrete functional
\begin{equation} \label{discreteJ}
\Pi_h(v_h)= \frac{1}{2} a(v_h,v_h)+J_h(v_h) - \langle g, v_h \rangle \quad \mbox{on} \quad K_h.
\end{equation}
%on $K_h$. 
Introducing an index set $N$  
for the nodes $s_\nu$ on the contact boundary $\overline{\Gamma}_c$, we may pull out the minimum from
under the sum, which leads to the expression
\[
\Pi_h(v_h)=\frac{1}{2} a(v_h,v_h)+
\min_{i(\cdot) \in I^N} \sum_{\nu\in N} c_\nu j_{i(\nu)}(s_\nu,v_{h2}(s_\nu)) - \langle g,v_h\rangle.
\]
This is the discrete version of  (\ref{struct}), where 
$\frac{1}{2}a(v_h,v_h)-\langle g,v_h\rangle$ is the smooth term $f_s$, and $J_h$ the nonsmooth part. 

While computation of Clarke subgradients is straightforward here,
we still have to explain how the
matrix $Q=Q(v)$ in the second-order working model (\ref{second}) is chosen. 
Discretizing the quadratic form of linear elasticity 
as $a(v_h,v_h)=v_h^\top {\bf A} v_h$ with the symmetric stiffness matrix ${\bf A}$,
and observing that $\langle g,v_h\rangle = {\bf g}^\top v_h$ is linear,
we choose $Q(v)  ={\bf A} + \sum_{\nu\in N} \nabla^2 j_{i(\nu)} (s_\nu,v_h(s_\nu))$, where 
$i(\nu)\in I$ is one of those indices, where the minimum
$\min_{i\in I} j_i(s_\nu,v_{h2}(s_\nu))$ is attained. 

For convergence of the lowest-order finite element approximation used here
we refer to the results in \cite{Ovcharova}. Higher-order approximations with no limitation in the
polynomial degree, 
which lead to nonconforming approximation of unilateral constraints, 
have only recently been analyzed for monotone contact problems, see \cite{Gwinner_2013}.

\subsection{Numerical results}
We present numerical results obtained in a  delamination simulation
with modulus of elasticity $E=210$ GPa and Poisson ratio $\nu =0.3$ corresponding to a steel specimen. In all examples we use the benchmark model of \cite{Baniotopoulos2} with geometrical characteristics $(0,100) \times (0,10) $ in [mm] and
thickness $5$mm.  We apply our bundle method to
(\ref{del2}) and compare the results to those obtained by
the regularization technique in \cite{Ovcharova, Ovcharova_Gwinner}. All  computations use 
piecewise linear functions and the discretization $40 \times 4$ corresponding to $h=0.25$cm. 
In this case, the number of the unknowns in the discrete problem (\ref{discreteJ}) is $80$. 

\begin{figure}[h!]
%\hspace*{-.5cm}
\includegraphics[width=7.8cm,height=4.6cm]{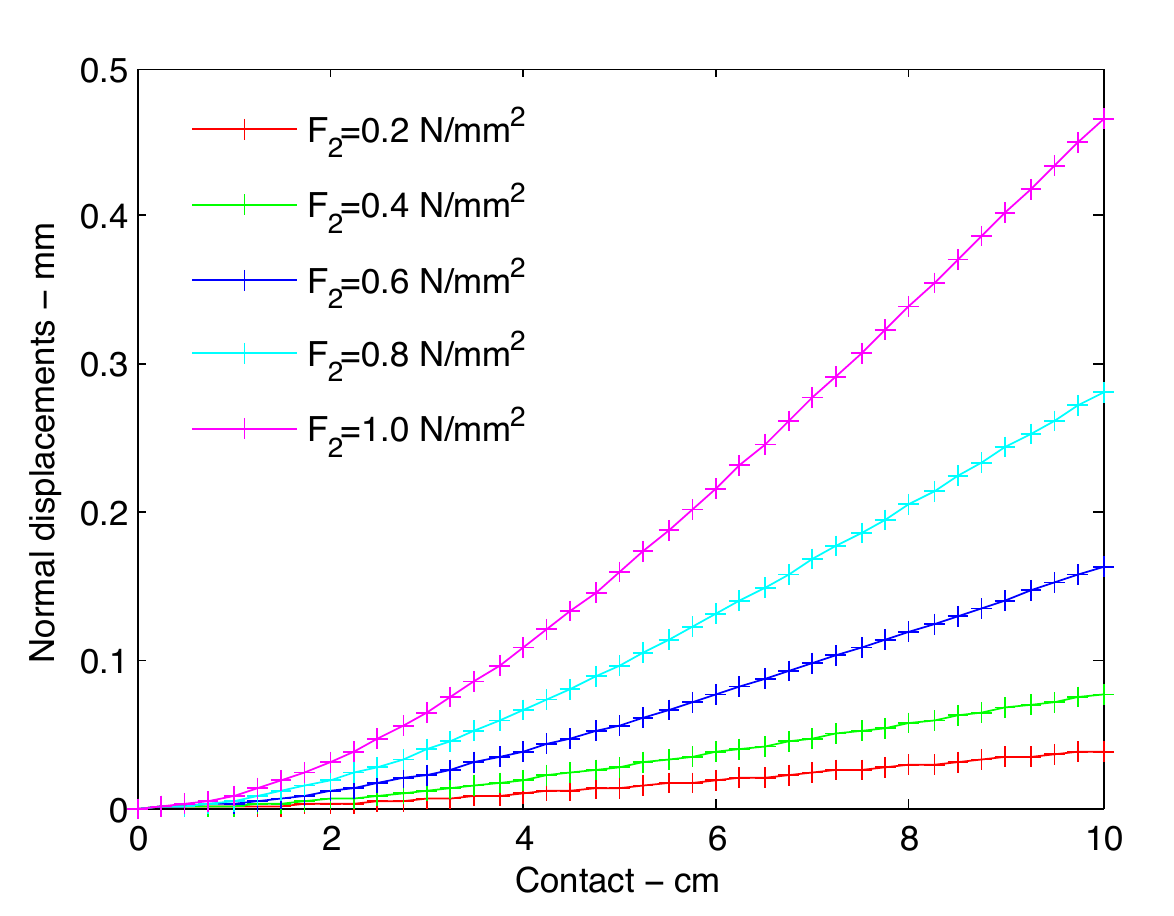}
\includegraphics[width=7.8cm,height=4.6cm]{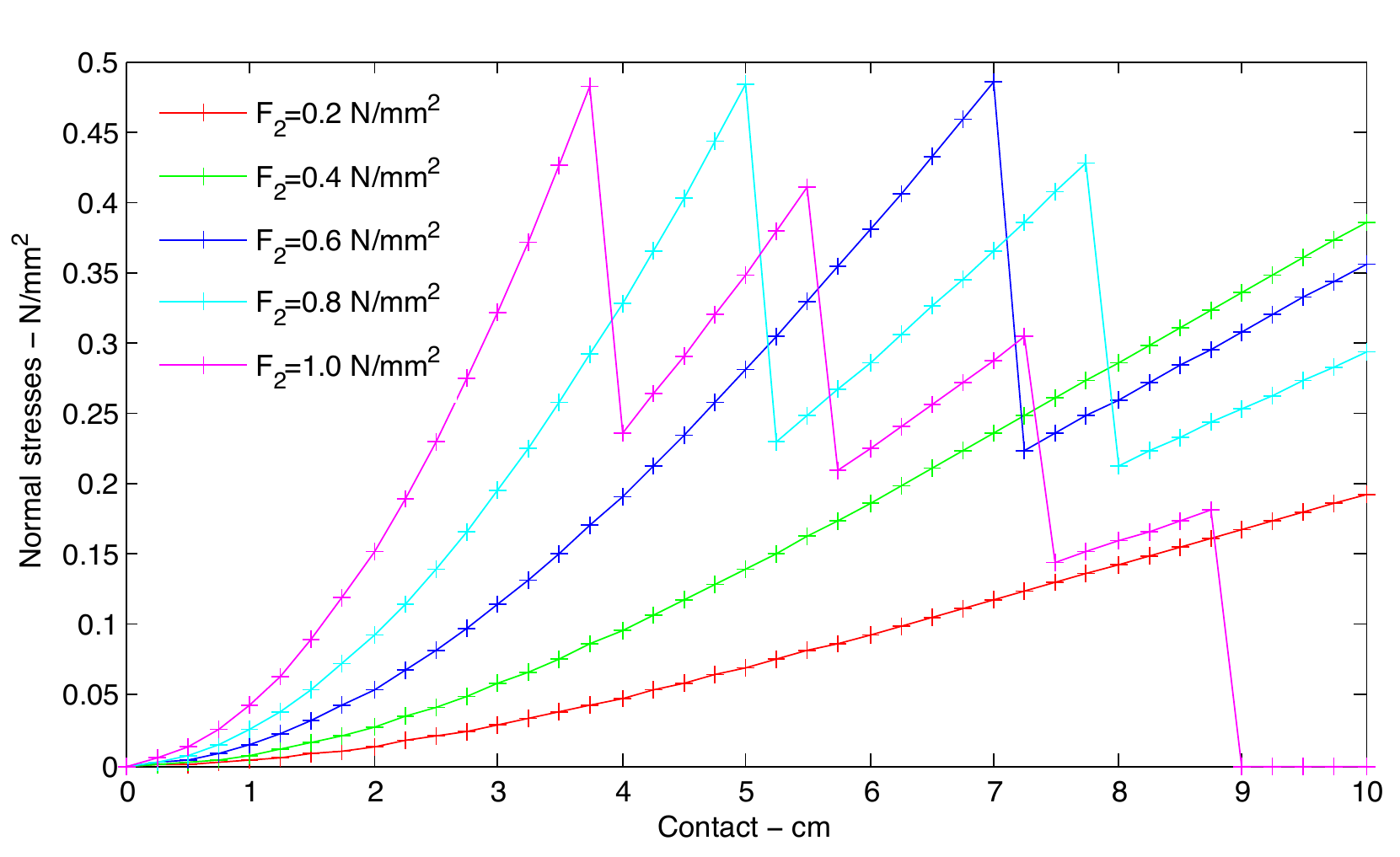}
\includegraphics[width=7.8cm,height=4.7cm]{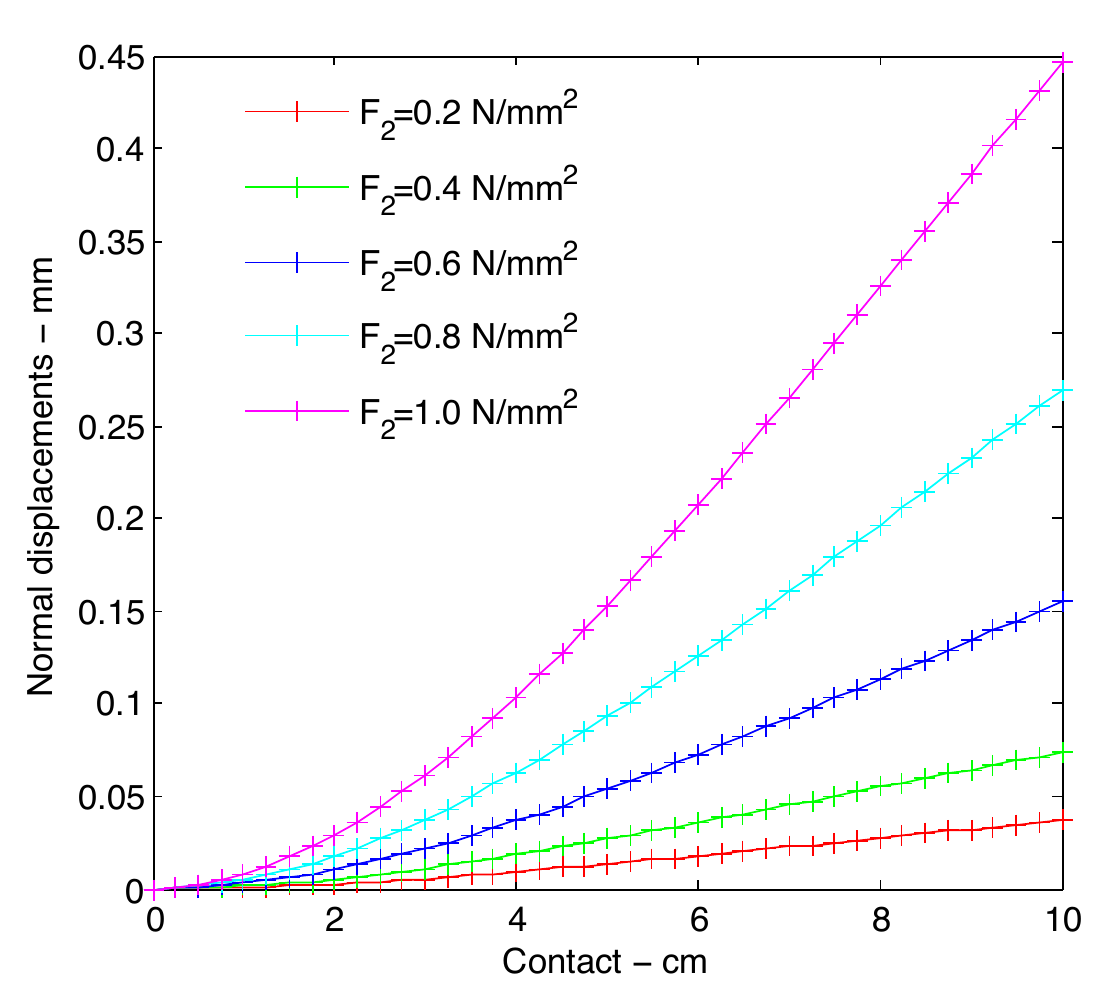}
\includegraphics[width=7.8cm,height=4.6cm]{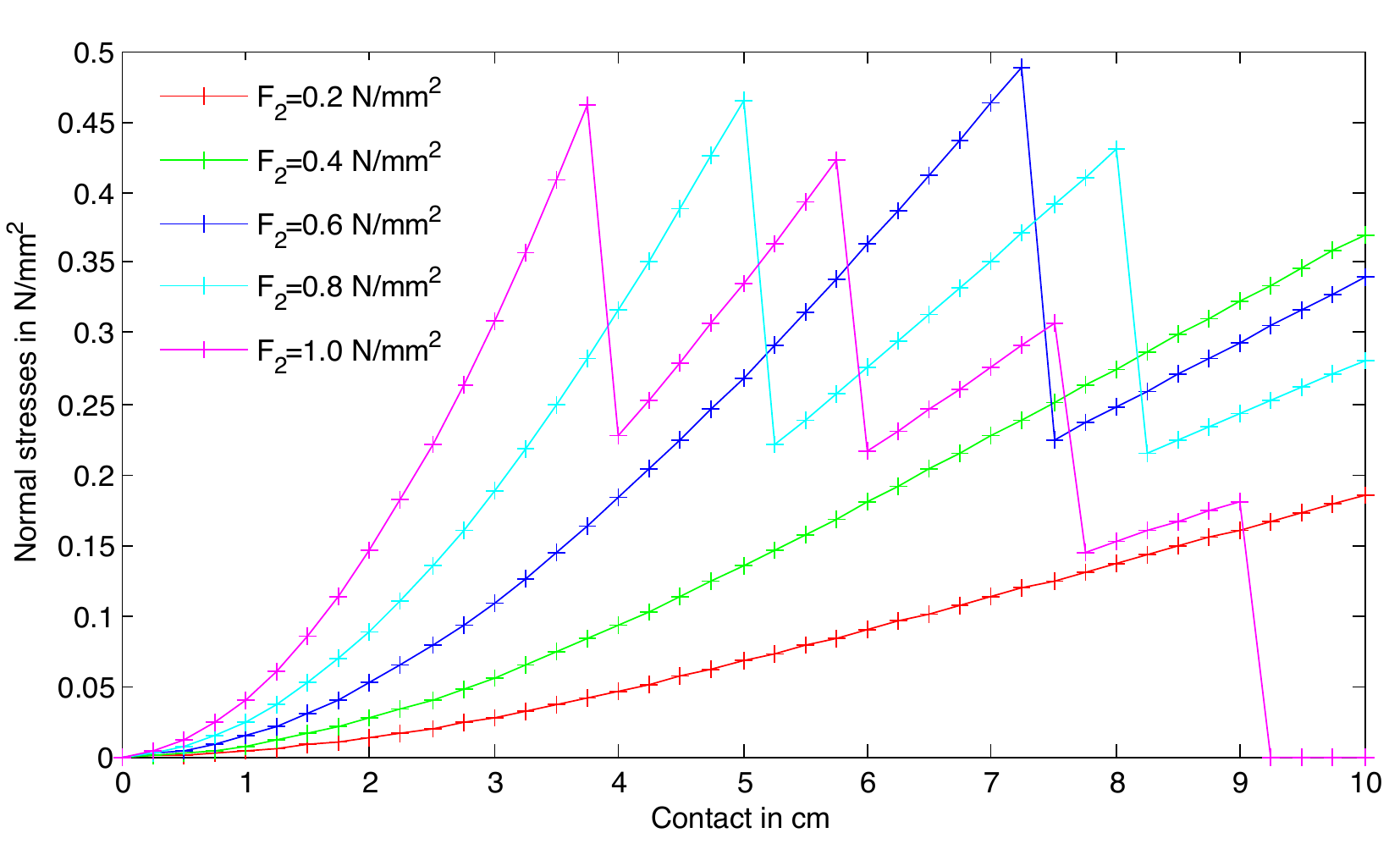}
\caption{Upper: regularization method of  \cite{Ovcharova, Ovcharova_Gwinner}. Lower: optimization method. Left image shows vertical displacement $u_2$
for 5 different values of  $F_2$. Right image shows vertical component of reactive force along contact boundary for   same 5
scenarios.}
\end{figure}

\begin{table}[h!]
\caption{Regularization. Vertical displacement [mm] at 4 intermediate points  for  same 5 scenarios.}
\centering
\begin{tabular}{||c||c|c|c|c||}
\hline\hline
$F_2 [N/mm^2]$  & $u_2(x_1)$ & $u_2(x_2)$ & $u_2(x_3)$ & $u_2(x_4)$ \\
\hline\hline
0.2&4.154500e-06&1.394500e-05&2.601700e-05&3.858700e-05\\
0.4&8.308100e-06&2.788800e-05&5.202800e-05&7.716600e-05\\
0.6&1.633200e-05&5.622700e-05&1.080000e-04&1.640000e-04\\
0.8&2.792500e-05&9.663100e-05&1.860000e-04&2.810000e-04\\
1.0&4.600600e-05&1.590000e-04&3.080000e-04&4.660000e-04\\
\hline\hline
\end{tabular}
\end{table}

\begin{table}[h!]
\caption{Optimization. Vertical displacement [mm] at four intermediate  points for  same 5 scenarios.}
\centering
\begin{tabular}{||c||c|c|c|c||}
\hline\hline
$F_2 [N/mm^2]$  & $u_2(x_1)$ & $u_2(x_2)$ & $u_2(x_3)$ & $u_2(x_4)$ \\
\hline\hline
0.2&4.022500e-06&1.345400e-05&2.499300e-05&3.691900e-05\\
0.4&8.069300e-06&2.698800e-05&5.013300e-05&7.404900e-05\\
0.6&1.564800e-05&5.373900e-05&1.030000e-04&1.550000e-04\\
0.8&2.691300e-05&9.297200e-05&1.790000e-04&2.700000e-04\\
1.0&4.414000e-05&1.530000e-04&2.940000e-04&4.470000e-04\\
\hline\hline
\end{tabular}
\end{table}

\begin{table}[h!]
\caption{Regularization. Horizontal displacement [mm] at four intermediate points  for  same 5 scenarios.}
\centering
\begin{tabular}{||c||c|c|c|c||}
\hline\hline
$F_2 [N/mm^2]$  & $u_2(x_1)$ & $u_2(x_2)$ & $u_2(x_3)$ & $u_2(x_4)$ \\
\hline\hline
0.2&1.481900e-06&2.251300e-06&2.474400e-06&2.499500e-06\\
0.4&2.963600e-06&4.502200e-06&4.948300e-06&4.998500e-06\\
0.6&5.918500e-06&9.400600e-06&1.077100e-05&1.097500e-05\\
0.8&1.015200e-05&1.625600e-05&1.866400e-05&1.904000e-05\\
1.0&1.674400e-05&2.690100e-05&3.100500e-05&3.167000e-05\\
\hline\hline
\end{tabular}
\end{table}

\begin{table}[h!]
\caption{Optimization. Horizontal displacement [mm] at four intermediate points  for  same 5 scenarios.}
\centering
\begin{tabular}{||c||c|c|c|c||}
\hline\hline
$F_2 [N/mm^2]$  & $u_2(x_1)$ & $u_2(x_2)$ & $u_2(x_3)$ & $u_2(x_4)$ \\
\hline\hline
0.2&1.432200e-06&2.161500e-06&2.356100e-06&2.368400e-06\\
0.4&2.872700e-06&4.335000e-06&4.724700e-06&4.748800e-06\\
0.6&5.663400e-06&8.957000e-06&1.023200e-05&1.041100e-05\\
0.8&9.777300e-06&1.561000e-05&1.787700e-05&1.822600e-05\\
1.0&1.606400e-05&2.578000e-05&2.970700e-05&3.034700e-05\\
\hline\hline
\end{tabular}
\end{table}

\begin{figure}[ht]
\hspace*{-.4cm}
\includegraphics[width=7.8cm,height=4.7cm]{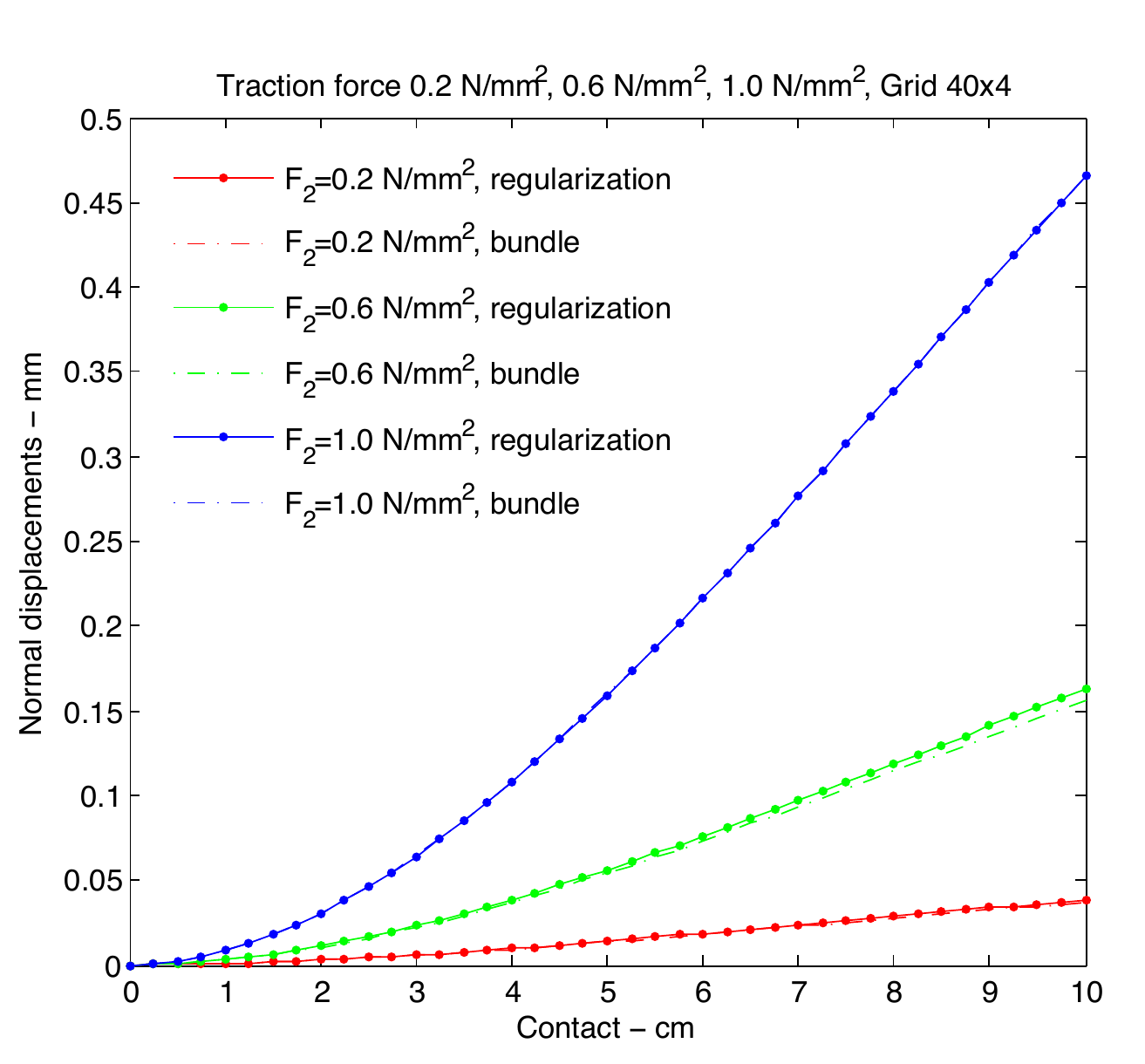}
\includegraphics[width=7.8cm,height=4.7cm]{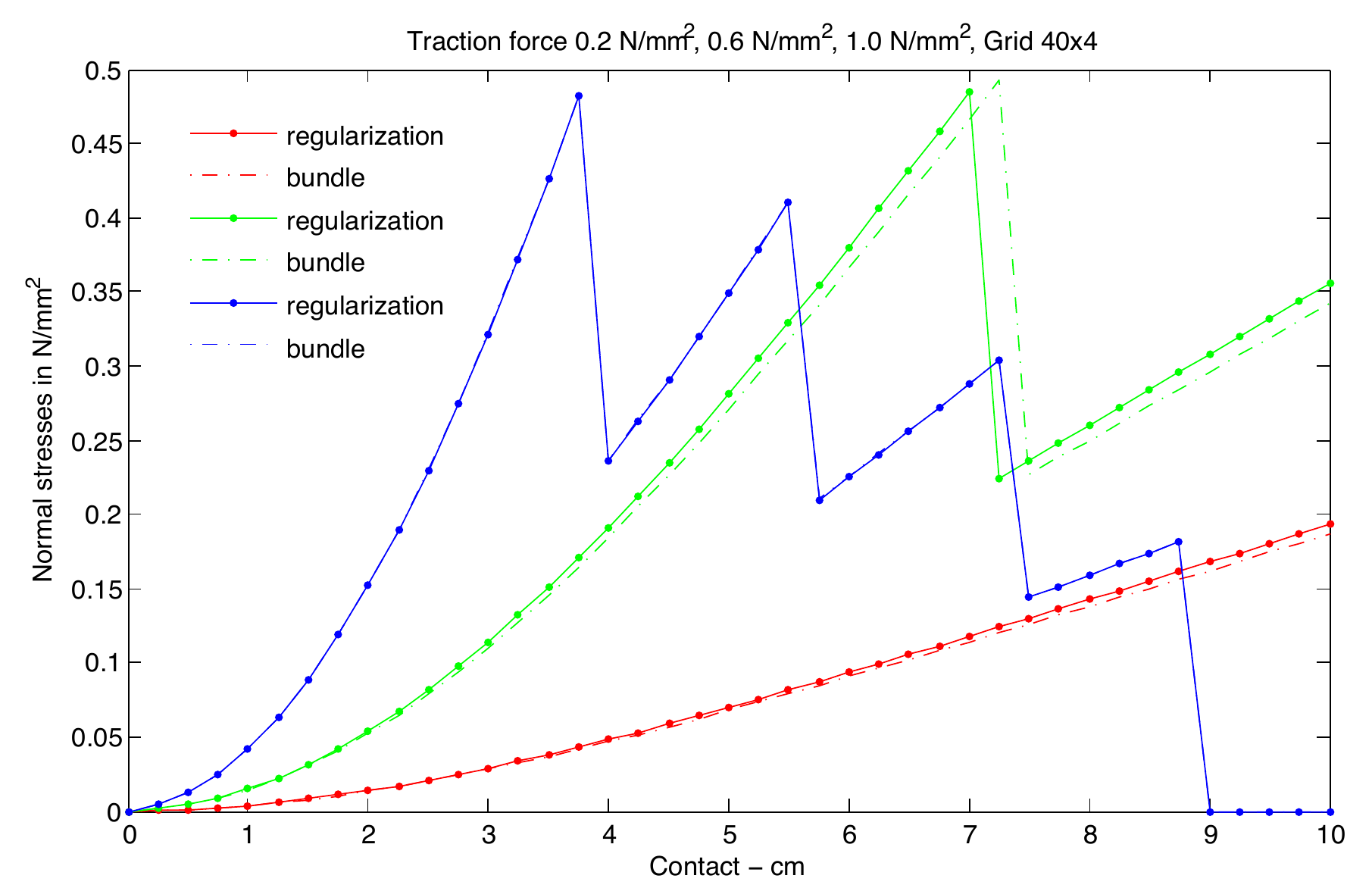}
\caption{Comparison of regularization (bold solid curves) and optimization (dashed) for 3 different values of  $F_2$.
Left vertical displacement, right reactive force.}
\end{figure}

\begin{table}[h!]
\caption{Comparison of optimal valued obtained by regularization and optimization}
\centering
\begin{tabular}{||c|c|c||}
\hline\hline
$F_2 [N/m^2] $             &    ${\Pi_h}_{\rm reg} [Nm]$    &     ${\Pi_h}_{\rm opt}$ [Nm] \\
\hline\hline
200000 &	-1.32894	&-1.29271 \\
\hline	
400000	&-2.35224&	-2.30025\\	
\hline
600000&	-3.83972&	-3.74609 \\	
\hline
800000	&-5.08164	&-5.05389	\\
\hline
1000000&	-5.66771	&-5.66770	\\
\hline\hline
\end{tabular}
\end{table}

\section*{Conclusion}
We have presented a bundle method based on the mechanism of
downshifted tangents which is suited to optimize upper- and lower-$C^1$ functions. Our method allows to integrate
second-order information, if available, and gives a convergence certificate in the sense of subsequences. Every accumulation point of the sequence of serious iterates with an arbitrary starting point is critical.
We have successfully applied our method to a delamination problem arising in the material sciences, where upper-$C^1$ functions
have to be minimized. 
%Our method has already been used with success for problems with lower-$C^1$ objectives in various control applications.
Results obtained by optimization were compared to results obtained by  the regularization technique of
 \cite{Ovcharova, Ovcharova_Gwinner},  and both methods are in good agreement.

\section*{Acknowledgment} 
The authors thank H.-J. Gudladt for many useful discussions. % and many helpful comments and suggestions. 
The authors were  partially supported by Bayerisch-Franz\"osisches Hochschulzentrum (BFHZ).

\begin {thebibliography}{99}
\bibitem{gudladt} M. Wetzel, J. Holtmannsp\"otter, H.-J. Gudladt, J. v. Czarnecki: Sensitivity of double cantilever beam test to
surface contamination and surface pretreatment. International Journal of Adhesion \& Adhesives, Vol. 46, 114-121 (2013)
\bibitem{miflin} R. Mifflin: A modification and an extension of Lemar\'echal's algorithm for nonsmooth
minimization. Math. Progr. Study 17, 77-90 (1982)
\bibitem{le-saga1}
C. Lemar\'echal:  Bundle methods in nonsmooth optimization. In Nonsmooth optimization (Proc. IIASA Workshop, Laxenburg, 1977), pp. 79-102, IIASA Proc. Ser., 3, Pergamon, Oxford-Elmsford, N.Y., 1978.
\bibitem{le-saga2} C. Lemar\'echal, C. Sagastiz\'abal: Variable metric bundle methods: from conceptual to implementable forms. Math. Programming 76 (1997), no. 3, Ser. B, 393-410.
\bibitem{zowe} {J. Zowe}: The BT-Algorithm for minimizing a nonsmooth functional subject to linear constraints, in  Nonsmooth Optimization and Related Topics , F. H. Clarke, V. F. Demyanov, F. Gianessi (eds.), Plenum Press (1989)
\bibitem{schramm} {H. Schramm, J. Zowe}: A version of the bundle idea for
minimizing a nonsmooth function: conceptual idea, convergence analysis, numerical results, SIAM J. Optim. 2,  121 - 152 (1992)
\bibitem{noll} D. Noll: Cutting plane oracles to minimize nonsmooth nonconvex functions. Set-Valued Var. Anal. 18 (3-4), 531-568 (2010)
\bibitem{pjo} D. Noll, O. Prot, A. Rondepierre: A proximity control algorithm to minimize nonsmooth nonconvex functions. Pacific J. Optim. 4 (3), 569-602 (2008)
\bibitem{gabarrou} D. Alazard, M. Gabarrou, D. Noll: Design of a flight control architecture using a nonconvex bundle method.
Math. Control Sign. Syst. 25 (2), 257-290 (2013)
\bibitem{flows} D. Noll: Convergence of nonsmooth descent methods using the Kurdyka-\L ojasiewicz inequality.
J. Optim. Theory Appl. (DOI) 10.1007/s10957-013-0391-8.
\bibitem{Makela} M.M. M\"{a}kel\"{a}, M. Miettinen, L. Luk\v{s}an,  J. Vl\v{c}ek: Comparing nonsmooth nonconvex bundle methods in solving hemivariational inequalities, Journal of Global Optimization 14  (2), 117-135  (1999).
\bibitem{Miettinen} M. Miettinen, M.M. M\"{a}kel\"{a}, J. Haslinger: On numerical solution of hemivariational inequalities by nonsmooth optimization methods, Journal of Global Optimization 6 (4), 401-425 (1995).
\bibitem{Luksan} L. Luk\v{s}an,  J. Vl\v{c}ek:   
A  Bundle-Newton method for nonsmooth unconstrained minimization, Math. Progr. 83,  373 - 391 (1998)
\bibitem{Haslinger}  J. Haslinger, M. Miettinen, P.D. Panagiotopoulos: 
  Finite Element Methods for Hemivariational Inequalities, Kluwer Academic Publishers (1999)
\bibitem{Czepiel} J. Czepiel: Proximal Bundle Method for a Simplified Unilateral Adhesion Contact Problem of Elasticity, Schedae Informaticae 20, 115-136 (2011)
  \bibitem {LeoSt} L. Nesemann, E.P. Stephan: Numerical solution of an adhesion problem with FEM and BEM, Appl. Numer. Math. 62 (5), 606-619 (2012)
\bibitem{Kocvara} M. Ko\v{c}vara, A. Mielke, T. Roub\'{\i}\v{c}ek: A rate-independent approach to the delamination problem, Math. Mech. Solids 11, No. 4, 423-447 (2006)
\bibitem{Roubicek} T. Roub\'{\i}\v{c}ek, V. Mantic, Panagiotopoulos, C.G.: A quasistatic mixed-mode delamination model, Discrete Contin. Dyn. Syst., Ser. S 6, No. 2, 591-610 (2013)
\bibitem{spingarn} J. E. Spingarn: Submonotone subdifferentials of Lipschitz functions. Trans. Amer. Math. Soc. 264, 77-89 (1981)
\bibitem{georgiev} A. Daniilidis, P. Georgiev: Approximate convexity and submonotonicity. J. Math. Anal. Appl. 291, 117-144 (2004)
\bibitem{rock} R. T. Rockafellar, R. J-B. Wets:
Variational Analysis. Springer Verlag (2004)
\bibitem{malick-dani} A. Daniilidis, J. Malick: Filling the gap between lower-$C^1$ and lower-$C^2$ functions. Journal of Convex Analysis
12(2), 2005, pp. 315 -- 329.
\bibitem{amenable}  R. A. Poliquin, R. T. Rockafellar:  Prox-regular functions in variational analysis, Trans. Amer. Math. Soc. 348 (5), 1805 - 1838 (1996)
\bibitem{kiwiel_aggregate}  K.C. Kiwiel: An aggregate subgradient method for nonsmooth
convex minimization, Math. Programming 27, 320 - 341 (1983)
\bibitem{cullum} J. Cullum, W.E. Donath, P. Wolfe: The minimization of certain nondifferential sums
of eigenvalues of symmetric matrices. Math. Progr. Stud. 3, 35-55 (1975)
\bibitem{anp} P. Apkarian, D. Noll, O. Prot: A trust region spectral bundle method for nonconvex eigenvalue optimization, SIAM J. Optim. 10 (1), 281-306 (2008)
\bibitem{rudzsinski} A. Ruszczy{\'n}ski:  Nonlinear optimization, 
Princeton University Press (2006)
\bibitem{hule:93} J.-B. Hiriart-Urruty, C. Lemar\'echal:  Convex Analysis and Minimization Algorithms, vol. I and II: Advanced Theory and Bundle Methods, vol. 306 of Grundlehren der mathematischen Wissenschaften, Springer Verlag, New York, Heidelberg, Berlin (1993)
\bibitem{saga} W. L. Hare, C. Sagastizabal: Computing proximal points of nonconvex functions, Math. Programming series B 116, 221-258 (2009) %Published electronically May 2007.

\bibitem {Naniewicz} Z. Naniewicz, P.D. Panagiotopoulos: Mathematical Theory of Hemivariational Inequalities and Applications, New York (1995).
\bibitem{Ovcharova} N. Ovcharova: Regularization Methods and Finite Element Approximation of Hemivariational Inequalities with Applications to Nonmonotone Contact Problems, PhD Thesis, Universit\"at der Bundeswehr M\"unchen, Cuvillier Verlag, G\"{o}ttingen (2012).
\bibitem{Ovcharova_Gwinner}  N. Ovcharova, J. Gwinner: 
A study of regularization techniques of nondifferentiable optimization in view of application to 
hemivariational inequalities, accepted for publication in JOTA, JOTA-D-13-00163
\bibitem{Ovcharova_Gwinner} Ovcharova, N., Gwinner. J: On the regularization method in nondifferentiable optimization applied to hemivariational inequalities, Constructive Nonsmooth Analysis and Related Topics, Springer, 59-70 (2013).
%\bibitem{Addi} Addi, K., Adly, S., Goeleven, D., Saoud, H.: 
%A sensitivity analysis of a class of semi-coercive variational inequalities using recession tools, J. Glob. Optim., Vol. \textbf{40}, 7-27 (2008)
%\bibitem{Bertsekas} Bertsekas, D.P.: 
%Nondifferentiable optimization via approximation,  
%Math. Program. Study, Vol. \textbf{3}, 1-25 (1975)
\bibitem{Baniotopoulos1} C.C. Baniotopoulos, J.  Haslinger,   Z. Mor\'{a}vkov\'{a}: Contact problems with nonmonotone friction: discretization and numerical realization, Comput. Mech. 40, 157-165 (2007)
\bibitem{Baniotopoulos2} C.C. Baniotopoulos, J. Haslinger, Z.   Mor\'{a}vkov\'{a}: Mathematical modeling of delamination and nonmonotone friction problems by hemivariational inequalities, Applications of Mathematics 50 (1), 1-25 (2005) 
\bibitem {Carl} S. Carl, V.K. Le, D. Motreanu: Nonsmooth Variational Problems and Their Inequalities, Springer (2007) 
%\bibitem{Clarke} Clarke, F.:  Optimization and Nonsmooth Analysis, 
%John Wiley and Sons, New York (1983)
% \bibitem {Chen2012} Chen, X.: Smoothing Methods for Nonsmooth, Nonconvex
%Minimization, Math. Program., Ser. B, Vol. \textbf{134}, 71-99 (2012)  
%\bibitem {Chen1998} Chen, X., Qi, L., Sun, D.:  Global
%  and superlinear convergence of the smoothing Newton method and its
%  application to general box constrained variational inequalities, Math. Comp., Vol. \textbf{67},   519-540 (1998)
% \bibitem{Chen1995} Chen, C., Mangasarian, O.L.: Smoothing methods for convex inequalities and linear complementarity problems, Math. Program., Vol. \textbf{71},  51-69 (1995)
%  \bibitem{Chen1996} Chen, C., Mangasarian, O.L.: 
%A class of smoothing functions for nonlinear and mixed complementarity problems, Comp. Optim. Appl., Vol. \textbf{5}, 97-138 (1996)
%  \bibitem {Facchinei} Facchinei, F.,  Pang, J-S.: 
%    Finite-Dimensional Variational Inequalities and Complementarity Problems,
%    Vol. I, Vol. II, Springer-Verlag (2003)  
 \bibitem{Glowinski1} R. Glowinski: 
 Numerical Methods for Nonlinear Variational Problems, Springer, New York (1984)
%\bibitem{Glowinski2}  Glowinski, R.,   Lions, J. L., Tr\'{e}moli\'{e}res, R.: 
 %Numerical Analysis of Variational Inequalities, North-Holland (1981)
\bibitem {Goeleven} D. Goeleven, D. Motreanu, Y. Dumont, M. Rochdi, M.: Variational and Hemivariational Inequalities: Theory, Methods and Applications,  Vol. I: Unilateral
Analysis and Unilateral Mechanics, Vol. II: Unilateral problems, Kluwer (2003)
\bibitem{Gwinner} J. Gwinner: Finite-element convergence for contact problems in plane linear elastostatics, Quarterly of Applied Mathematics, Vol. 50,  11-25 (1992)
\bibitem{Gwinner_2013} J. Gwinner: hp-FEM convergence for unilateral contact problems with Tresca friction in plane linear elastostatics, J. Comput. Appl. Math., Vol. 254, 175-184 (2013)
      \bibitem{Panagiotopoulos1993} P.D. Panagiotopoulos: Hemivariational
    inequalities. Applications  in mechanics and engineering,  
Berlin, Springer (1993)
\bibitem{Panagiotopoulos1998} P.D. Panagiotopoulos: 
 Inequality problems in     mechanics and application. Convex  and nonconvex energy functions, Basel, Birkh\"{a}user (1998)
 %\bibitem{Pang} Pang, J-S.: Newton�s method for B-differentiable equations,
%Math. Oper. Res., \textbf{15}, 311-341 (1990)
%\bibitem{Pinar}  Pinar, M. �., Zenios, S. A.: On smoothing exact penalty
%functions for convex constrained optimization, SIAM J. Optim.,
%Vol. \textbf{4}, No. \textbf{3}, 486-511 (1994)
%  \bibitem {Qi} Qi, L., Sun, D.: Smoothing functions and a
%  smoothing Newton method for complementarity and
%  variational inequality problems, %(School of Mathematics,  University of
  %New South Wales, Sydney, 1998),  
%  J. Optim. Theory Appl., Vol. \textbf{113}, No. \textbf{1}, 121-147 (2002) 
%\bibitem{Reddy}  Reddy, B.D., Griffin, T.B.:  Variational principles and convergence of finite element approximations of a holonomic elastic-plastic problem, Journal Numerische Mathematik, Vol. \textbf{52}, 101-117 (1988)
%    \bibitem{Rockafellar} Rockafellar, R. T., Wets, R. J-B.: Variational Analysis, Springer, New York (1998)
    \bibitem{Sofonea} M. Sofonea, A. Matei:  Variational Inequalities with Applications, Springer (2009)
%\bibitem{Sun}  Sun, D., Qi, L.: Solving variational inequality problems
%via smoothing-nonsmooth reformulations, J. Comput. Appl. Math., Vol.  \textbf{129}, 37-62 (2001)
%  \bibitem{Zang}  Zang, I.: A smoothing-out technique for min-max optimization,
%Math. Program., Vol. \textbf{19}, 61-77 (1980)

%\bibitem{selection} Kuratowski, Ryll-Narczewski

\end {thebibliography}

\end{document}